\documentclass[11pt,a4paper,leqno]{amsart}
\usepackage{a4wide}
\setlength{\textheight}{23cm}
\setlength{\textwidth}{16cm}
\usepackage{latexsym}
\usepackage{amsmath}
\usepackage{amssymb}
\usepackage{stackrel} 
\usepackage{color}
\usepackage{graphicx}
\usepackage{hyperref}

\newtheorem{lemma}{Lemma}
\newtheorem{prop}{Proposition}
\newtheorem{theo}{Theorem}
\newtheorem{corol}{Corollary}
\theoremstyle{definition}
\newtheorem{defin}{Definition}

\newtheorem{example}{Example}
\pagestyle{myheadings}

\newcommand{\C}{\mathbb{C}}
\newcommand{\N}{\mathbb{N}}
\newcommand{\R}{\mathbb{R}}


\makeatletter
\def\author@andify{%
  \nxandlist {\unskip ,\penalty-1 \space\ignorespaces}%
    {\unskip {} \@@and~}%
    {\unskip \penalty-2 \space \@@and~}%
}
\makeatother

\begin{document}
\title[Summability of formal solutions for some generalized moment PDEs]{Summability of formal solutions for some generalized moment partial differential equations}
\author{Alberto Lastra}
\address{Departamento de F\'isica y Matem\'aticas\\
University of Alcal\'a\\
Ap. de Correos 20, E-28871 Alcal\'a de Henares (Madrid), Spain}
\email{alberto.lastra@uah.es}
\author{S{\l}awomir Michalik}
\address{Faculty of Mathematics and Natural Sciences,
College of Science\\
Cardinal Stefan Wyszy\'nski University\\
W\'oycickiego 1/3,
01-938 Warszawa, Poland}
\email{s.michalik@uksw.edu.pl}
\urladdr{\url{http://www.impan.pl/~slawek}}
\author{Maria Suwi\'nska}
\address{Faculty of Mathematics and Natural Sciences,
College of Science\\
Cardinal Stefan Wyszy\'nski University\\
W\'oycickiego 1/3,
01-938 Warszawa, Poland}
\email{m.suwinska@op.pl}
\date{}
\keywords{summability, formal solution, moment estimates, moment derivatives, moment partial differential equations}
\subjclass[2010]{35C10, 35G10}
\begin{abstract}
The concept of moment differentiation is extended to the class of moment summable functions, giving rise to moment differential properties. The main result leans on accurate upper estimates for the integral representation of the moment derivatives of functions under exponential-like growth at infinity, and appropriate deformation of the integration paths. The theory is applied to obtain summability results of certain family of generalized linear moment partial differential equations with variable coefficients. 

\end{abstract}

\maketitle
\thispagestyle{empty}

\section{Introduction}

This work is devoted to the study of the summability properties of formal solutions of moment partial differential equations in the complex domain. The purpose of this work is twofold. On the one hand, a deeper knowledge on the moment derivative operator acting on certain functional spaces of analytic functions is put into light; and on the other hand, the previous knowledge serves as a tool  to attain summability results of the formal solutions of concrete families of Cauchy problems. 

The study of moment derivatives, generalizing classical ones, and the solution of moment partial differential equations is of increasing interest in the scientific community. The concept of moment derivative was put forward by W. Balser and M. Yoshino in 2010, in~\cite{BY}. Given a sequence of positive real numbers (in practice a sequence of moments), say $m:=(m(p))_{p\ge0}$, the operator of moment derivative $\partial_{m,z}:\C[[z]]\to\C[[z]]$ acts on the space of formal power series with complex coefficients into itself in the following way (see Definition~\ref{def260}):
$$\partial_{m,z}\left(\sum_{p\ge0}\frac{a_p}{m(p)}z^p\right)=\sum_{p\ge0}\frac{a_{p+1}}{m(p)}z^p.$$
This definition can be naturally extended to holomorphic functions defined on a neighborhood of the origin.

The choice $m=(\Gamma(1+p))_{p\ge0}=(p!)_{p\ge0}$ determines the usual derivative operator, whereas $m=\left(\Gamma\left(1+\frac{p}{s}\right)\right)_{p\ge0}$ is linked to the Caputo $1/s$-fractional differential operator $\partial_{z}^{1/s}$ (see~\cite{M}, Remark 3). Given $q\in (0,1)$ and $m=([p]_{q}!)_{p\ge0}$, with $[p]!_q=[1]_q[2]_q\cdots[p]_{q}$ and $[h]_q=\sum_{j=0}^{h-1}q^{j}$, the operator $\partial_{m,z}$ coincides with the $q$-derivative $D_{q,z}$ defined by
$$D_{q,z}f(z)=\frac{f(qz)-f(z)}{qz-z}.$$
Several recent studies of the previous functional equations have been made in  the complex domain and in terms of summability of their formal solutions, such as~\cite{michalik10} regarding summability of fractional linear partial differential equations;~\cite{immink,immink2} in the study of difference equations; or~\cite{dreyfus,ichinobeadachi,lastramalek} in the study of $q$-difference-differential equations. 

In the more general framework of moment partial differential equations, the seminal work~\cite{BY} was followed by other studies such as~\cite{M} where the second author solves certain families of Cauchy problems under the action of two moment derivatives. We also refer to~\cite{michalik13jmaa,michalik17fe} and~\cite{sanzproceedings} (Section 7), where conditions on the convergence and summability of formal solutions to homogeneous and inhomogeneous linear moment partial differential equations in two complex variables with constant coefficients are stated. Further studies of moment partial differential equations with constant coefficients are described in~\cite{lastramaleksanz}, or in~\cite{michaliktkacz} when dealing with the Stokes phenomenon, closely related to the theory of summability. We also cite~\cite{lastramaleksanz2}, where the moments govern the growth of the elements involved in the problem under study.

A first step towards the study of summability of the formal solution of a functional equation is that of determining the growth rate of its coefficients, which is described in the works mentioned above, and also more specifically in the recent works~\cite{LMS,michaliksuwinska,suwinska} when dealing with moment partial differential equations with constant and time-dependent coefficients. See also the references therein for a further knowledge on the field.

The present work takes a step forward into the theory of generalized summability of formal solutions of moment partial differential equations. The first main result (Theorem~\ref{lemma1}) determines the integral representation of the moment derivatives ($m$-derivatives) of an analytic function defined on an infinite sector with the vertex at the origin together with a neighborhood of the origin, with prescribed exponential-like growth governed by a second sequence, say $\tilde{\mathbb{M}}$. In addition to this, accurate upper estimates of such derivatives are provided showing the same exponential-like growth at infinity, but also its dependence on the moment sequence $m$. This result entails that the set of $\tilde{\mathbb{M}}$-summable formal power series along certain direction $d\in\R$, $\C\{z\}_{\tilde{\mathbb{M}},d}$ (see Definition~\ref{defi271} and Theorem~\ref{teo1}), is closed under the action of the operator $\partial_{m,z}$. As a consequence, it makes sense to extend the definition of $\partial_{m,z}$ to $\C\{z\}_{\tilde{\mathbb{M}},d}$ (Definition~\ref{def487}) and also to provide analogous estimates as above for the $m$-derivatives of the elements in $\C\{z\}_{\tilde{\mathbb{M}},d}$ (Proposition~\ref{prop497}).

We apply the previous theory to achieve summability results on moment partial differential equations of the form
\begin{equation}\label{epralintro}
\left\{ \begin{array}{lcc}
            \left(\partial_{m_1,t}^{k}-a(z)\partial_{m_2,z}^{p}\right)u(t,z)=\hat{f}(t,z)&\\
             \partial_{m_1,t}^{j}u(0,z)=\varphi_j(z),&\quad j=0,\ldots,k-1,
             \end{array}
\right.
\end{equation}
where $1\le k<p$ are integer numbers and $m_1,\,m_2$ are moment sequences under additional assumptions. The elements $a(z),\,a(z)^{-1},\,\varphi_j(z)$ for $j=0,\ldots,k-1$ are assumed to be holomorphic functions in a neighborhood of the origin, and $\hat{f}\in\C[[t,z]]$. The second main result of this research (Theorem~\ref{teopral}) states that summability of the unique formal solution of (\ref{epralintro}) $\hat{u}(t,z)$ (with respect to $z$ variable) along direction $d\in\R$ is equivalent to summability of $\hat{f}$ and $\partial_{m_2,z}^{j}\hat{u}(t,0)$, for $j=0,\ldots,p-1$, along $d$. A result on the convergence of the formal solution is also provided (Corollary~\ref{corofinal}). It is worth mentioning that the results on the upper estimates of formal solutions obtained in~\cite{LMS} remain coherent with these results, and also with those in~\cite{remy2016}, in the Gevrey classical settings. The study of more general moment problems remains open and it is left for future research of the authors.

The paper is structured as follows: After a section describing the notation followed in the present study (Section~\ref{secnot}), we recall the main concepts and results on the generalized moment differentiation of formal power series. Section~\ref{sec31} is devoted to recalling the main tools associated with strongly regular sequences and some of their related elements. In Section~\ref{secfun}, based on the general moment summability methods, we state the first main result of the paper (Theorem~\ref{lemma1}) and its main consequences. The work is concluded in Section~\ref{secapp} with the application of the theory to the summability of formal solutions of certain family of Cauchy problems involving moment partial differential equations in the complex domain (Theorem~\ref{teopral}).

\section{Notation}\label{secnot}

Let $\mathbb{N}$ denote the set of natural numbers
$\{1,2,\cdots\}$ and $\mathbb{N}_0:=\mathbb{N}\cup\{0\}$.

$\mathcal{R}$ stands for the Riemann surface of the logarithm. 

Let $\theta>0$ and $d\in\R$. We write $S_{d}(\theta)$ for the open infinite sector contained in the Riemann surface of the logarithm with the vertex at the origin, bisecting direction $d\in\R$ and opening $\theta>0$, i.e.,
$$S_{d}(\theta):=\left\{z\in\mathcal{R}: |\hbox{arg}(z)-d|<\frac{\theta}{2}\right\}.$$
We write $S_d$ in the case when the opening $\theta>0$ does not need to be specified. A sectorial region $G_d(\theta)$ is a subset of $\mathcal{R}$ such that $G_d(\theta)\subseteq S_{d}(\theta)\cap D(0,r)$ for some $r>0$, and for all $0<\theta'<\theta$ there exists $0<r'<r$ such that $(S_d(\theta')\cap D(0,r'))\subseteq G_d(\theta)$. We denote by $\hbox{arg}(S)$ the set of arguments of $S$, in particular $\hbox{arg}(S_{d}(\theta))=\left(d-\frac{\theta}{2},d+\frac{\theta}{2}\right)$.

We put $\hat{S}_{d}(\theta;r):=S_{d}(\theta)\cup D(0,r)$. Analogously, we write $\hat{S}_{d}(\theta)$ (resp. $\hat{S}_{d}$) whenever the radius $r>0$ (resp. the radius and the opening $r,\,\theta>0$) can be omitted. We write $S\prec S_d(\theta)$ whenever $S$ is an infinite sector with the vertex at the origin with $\overline{S}\subseteq S_d(\theta)$. Analogously, we write $\hat{S}\prec \hat{S}_d(\theta;r)$ if $\hat{S}=S\cup D(0,r')$, with $S\prec S_{d}(\theta)$ and $0<r'<r$. Given two sectorial regions $G_d(\theta)$ and $G_{d'}(\theta')$, we use notation $G_d(\theta)\prec G_{d'}(\theta')$ whenever this relation holds for the sectors involved in the definition of the corresponding sectorial regions.

Given a complex Banach space $(\mathbb{E},\left\|\cdot\right\|_{\mathbb{E}})$, the set $\mathcal{O}(U,\mathbb{E})$ stands for the set of holomorphic functions in a set $U\subseteq\C$, with values in $\mathbb{E}$. If $\mathbb{E}=\C$, then we simply write $\mathcal{O}(U)$. We denote the formal power series with coefficients in $\mathbb{E}$ by $\mathbb{E}[[z]]$.

Given $\hat{f},\hat{g}\in\mathbb{E}[[z]]$, with $\hat{f}(z)=\sum_{p\ge0}f_pz^p$ and $\hat{g}(z)=\sum_{p\ge0}g_pz^p$, such that $g_p\ge0$ for all $p\ge 0$, we write $\hat{f}(z)\ll\hat{g}(z)$ if $|f_p|\le g_p$ for all $p\ge0$.

\section{On generalized summability and moment differentiation}

The aim of this section is to recall the concept and main results on the so-called generalized moment differentiation of formal power series. Certain algebraic properties associated with the families of analytic functions which are related to this notion allow to go further by defining the moment differentiation associated with the sum of a given formal power series.

\subsection{Strongly regular sequences and related elements}\label{sec31}

As a first step, we recall the main tools associated with strongly regular sequences and some of their related elements. The concept of a strongly regular sequence is put forward by V. Thilliez in~\cite{thilliez}.

\begin{defin}
Let $\mathbb{M}:=(M_{p})_{p\ge0}$ be a sequence of positive real numbers with $M_0=1$. 
\begin{itemize}
\item[$(lc)$] The sequence $\mathbb{M}$ is logarithmically convex if 
$$M_p^2\le M_{p-1}M_{p+1},\hbox{ for all }p\ge1.$$
\item[$(mg)$] The sequence $\mathbb{M}$ is of moderate growth if there exists $A_1>0$ such that $$M_{p+q}\le A_1^{p+q}M_{p}M_{q},\hbox{ for all }p,q\ge 0.$$
\item[$(snq)$] The sequence $\mathbb{M}$ satisfies the strong non-quasianalyticity condition if there exists $A_2>0$ such that
$$\sum_{q\ge p}\frac{M_q}{(q+1)M_{q+1}}\le A_2\frac{M_p}{M_{p+1}},\hbox{ for all }p\ge0.$$
\end{itemize}
Any sequence satisfying properties $(lc)$, $(mg)$ and $(snq)$ is known as a \textit{strongly regular sequence}.
\end{defin}

It is worth recalling that given a (lc) sequence $\mathbb{M}=(M_p)_{p\ge0}$, one has 
\begin{equation}\label{e136}
M_pM_q\le M_{p+q}, \hbox{ for all }p,q\in\N_0
\end{equation}
(see Proposition 2.6 (ii.2)~\cite{sanzproceedings}), which entails that given $s\in\N_0$, one has
\begin{equation}\label{e140}
M_p^{s}\le M_{ps},\hbox{ for all }p\in\N_0,
\end{equation}
following an induction argument.

Examples of such sequences are of great importance in the study of formal and analytic solutions of differential equations. Gevrey sequences are predominant among them, appearing as upper bounds for growth of the coefficients of the formal solutions of such equations. Given $\alpha>0$, the Gevrey sequence of order $\alpha$ is defined by $\mathbb{M}_{\alpha}:=(p!^{\alpha})_{p\ge0}$. A natural generalization of the previous are the sequences defined by $\mathbb{M}_{\alpha,\beta}:=(p!^{\alpha}\prod_{m=0}^{p}\log^{\beta}(e+m))_{p\ge0}$ for $\alpha>0$ and $\beta\in\R$. These sequences turn out to be strongly regular sequences, provided that the first terms are slightly modified in the case that $\beta<0$, without affecting their asymptotic behavior. Formal solutions of difference equations are quite related to the 1+ level, associated with the sequence $\mathbb{M}_{1,-1}$, see~\cite{immink,immink2}. 

Given a strongly regular sequence $\mathbb{M}=(M_p)_{p\ge0}$, one can define the function
\begin{equation}\label{e123}
M(t):=\sup_{p\ge0}\log\left(\frac{t^p}{M_p}\right),\quad t>0,\qquad M(0)=0,
\end{equation}
which is non-decreasing, and continuous in $[0,\infty)$ with $\lim_{t\to\infty}M(t)=+\infty$. J. Sanz (\cite{sanz},~Theorem 3.4) proves that the order of the function $M(t)$, defined in~\cite{goldbergostrovskii} by
$$\rho(M):=\limsup_{r\to\infty}\max\left\{0,\frac{\log (M(r))}{\log(r)}\right\}$$
is a positive real number. Moreover, its inverse $\omega(\mathbb{M}):=1/\rho(M)$ determines the limit opening for a sector in such a way
that nontrivial flat function in ultraholomorphic classes of functions defined on such sectors exist, see Corollary 3.16,~\cite{jjs19}. Indeed, $\omega(\mathbb{M})$ can be recovered directly from $\mathbb{M}$ under some admissibility conditions on the sequence $\mathbb{M}$ (Corollary 3.10,~\cite{jjs17}):
$$\lim_{p\to\infty}\frac{\log(M_{p+1}/M_p)}{\log(p)}=\omega(\mathbb{M}).$$ 
Such conditions are satisfied by the sequences of general use in the asymptotic theory of solutions to functional equations.

The next results can be found in~\cite{chaumatchollet,thilliez} under more general assumptions.
\begin{lemma}\label{lemaaux1}
Let $\mathbb{M}$ be a strongly regular sequence, and let $s\ge1$. Then, there exists $\rho(s)\ge1$ which only depends on $\mathbb{M}$ and $s$, such that 
$$ \exp\left(-M(t)\right)\le \exp(-sM(t/\rho(s))),$$
for all $t\ge0$.
\end{lemma}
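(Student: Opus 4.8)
The plan is to rewrite the claimed inequality in the equivalent form $M(t)\ge s\,M(t/\rho(s))$ for all $t\ge 0$, to prove it first for integer $s$, and to deduce the general case from that. The reduction is immediate once one notices that $M$ is nonnegative: indeed $M(t)=\sup_{p\ge0}\log(t^p/M_p)\ge\log(t^0/M_0)=0$ for every $t\ge0$. Hence, if $k:=\lceil s\rceil$ and $\rho(k)\ge1$ satisfy $M(t)\ge k\,M(t/\rho(k))$ for all $t$, then $M(t)\ge k\,M(t/\rho(k))\ge s\,M(t/\rho(k))$ because $M(t/\rho(k))\ge0$, so one may simply set $\rho(s):=\rho(\lceil s\rceil)$.

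For $s\in\N$, the key ingredient is the moderate growth property $(mg)$, applied repeatedly: from $M_{sp}=M_{(s-1)p+p}\le A_1^{sp}M_{(s-1)p}M_p$ an induction on $s$ gives a constant $B_s>0$ depending only on $A_1$ and $s$ (for instance $B_s=A_1^{(s-1)s}$) such that $M_{sp}\le B_s^{\,p}M_p^{s}$ for all $p\ge0$. Setting $\rho(s):=\max\{1,B_s^{1/s}\}\ge1$, this becomes $M_{sp}\le\rho(s)^{sp}M_p^{s}$, that is, $\bigl((t/\rho(s))^p/M_p\bigr)^{s}\le t^{sp}/M_{sp}$ for every $t>0$ and $p\ge0$.

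Taking logarithms, for each $p\ge0$ and $t>0$ we obtain
$$s\log\!\left(\frac{(t/\rho(s))^p}{M_p}\right)\le\log\!\left(\frac{t^{sp}}{M_{sp}}\right)\le\sup_{q\ge0}\log\!\left(\frac{t^q}{M_q}\right)=M(t).$$
The right-hand side is independent of $p$, so taking the supremum over $p\ge0$ on the left yields $s\,M(t/\rho(s))\le M(t)$ for all $t>0$; the case $t=0$ holds trivially since $M(0)=0$. Exponentiating the resulting inequality $-M(t)\le -s\,M(t/\rho(s))$ gives the statement, with $\rho(s)$ depending only on the constant $A_1$ of $(mg)$ and on $s$.

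I do not expect a genuine difficulty here: the two points needing a little care are the bookkeeping of the constant $B_s$ in the iterated moderate-growth estimate and the observation that the nonnegativity of $M$ is precisely what allows the passage from integer $s$ to arbitrary $s\ge1$. Notably, neither logarithmic convexity nor strong non-quasianalyticity is used in this particular lemma.
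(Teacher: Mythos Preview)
The paper does not prove this lemma; it merely states that the result ``can be found in~\cite{chaumatchollet,thilliez} under more general assumptions.'' Your argument is a correct and self-contained proof. The reduction to integer $s$ via nonnegativity of $M$ is clean, and the core step---iterating $(mg)$ to obtain $M_{sp}\le B_s^{\,p}M_p^{s}$ and then passing to the supremum defining $M$---is exactly the standard route taken in those references. One cosmetic remark: your example constant $B_s=A_1^{(s-1)s}$ tacitly uses $A_1\ge1$ (otherwise the induction gives a smaller exponent $s(s+1)/2-1$ and your bound could go the wrong way), but this is harmless since one may always enlarge $A_1$ to $1$ in $(mg)$ without loss of generality. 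Your closing observation that only $(mg)$ is needed here is also accurate.
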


In view of Lemma 1.3.4~\cite{thilliez}, one has that given a strongly regular sequence, $\mathbb{M}=(M_{p})_{p\ge0}$ and $s>0$. Then, the sequence $\mathbb{M}^s=(M_p^{s})_{p\ge0}$ is strongly regular, and $\omega(\mathbb{M}^s)=\omega(s\mathbb{M})$.

Following~\cite{petzsche}, one has the next definition.

\begin{defin}
Given two sequences of positive real numbers, $\mathbb{M}=(M_p)_{p\ge0}$ and $\tilde{\mathbb{M}}:=(\tilde{M}_p)_{p\ge 0}$, we say that $\mathbb{M}$ and $\tilde{\mathbb{M}}$ are \textit{equivalent} if there exist $B_1,B_2>0$ with
\begin{equation}\label{e195}
B_1^p M_p\le \tilde{M}_p\le B_2^p M_p,
\end{equation}
for every $p\ge0$.
\end{defin}

The next result is a direct consequence of the definition of the function $M$ in (\ref{e123}). 

\begin{lemma}\label{lemma2}
Let $\mathbb{M},\,\tilde{\mathbb{M}}$ be two strongly regular sequences which are equivalent. Let $M(t)$ (resp. $\tilde{M}(t)$) be the function associated with $\mathbb{M}$ (resp. $\tilde{\mathbb{M}}$) through (\ref{e123}). Then, it holds that
$$ M\left(\frac{t}{B_2}\right)\le \tilde{M}(t)\le M\left(\frac{t}{B_1}\right)  \hbox{ for all }t\ge0,$$
where $B_1,\,B_2$ are the positive constants in (\ref{e195}). 
\end{lemma}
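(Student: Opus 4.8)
The plan is to unwind the definition (\ref{e123}) of the associated function and feed in the two-sided estimate (\ref{e195}) directly; no deep ingredient is needed. First I would dispose of the degenerate value $t=0$: by the convention in (\ref{e123}) we have $\tilde{M}(0)=0$ and $M(0/B_1)=M(0/B_2)=M(0)=0$, so the claimed chain of inequalities is trivially an equality there. Note also that for $t\ge 0$ the arguments $t/B_1$ and $t/B_2$ lie in $[0,\infty)$, so $M$ is being evaluated within its domain.

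For the main case $t>0$, fix $p\ge 0$. From the right-hand inequality $\tilde{M}_p\le B_2^p M_p$ in (\ref{e195}) one gets
$$\frac{t^p}{\tilde{M}_p}\ge \frac{t^p}{B_2^p M_p}=\frac{(t/B_2)^p}{M_p},$$
and applying the increasing function $\log$ and then taking the supremum over $p\ge 0$ yields $\tilde{M}(t)\ge \sup_{p\ge0}\log\!\left((t/B_2)^p/M_p\right)=M(t/B_2)$. Symmetrically, from $B_1^p M_p\le\tilde{M}_p$ one obtains $\dfrac{t^p}{\tilde{M}_p}\le\dfrac{(t/B_1)^p}{M_p}$, whence $\log\!\left(t^p/\tilde{M}_p\right)\le\log\!\left((t/B_1)^p/M_p\right)$ for every $p$, and passing to the supremum gives $\tilde{M}(t)\le M(t/B_1)$. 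Combining the two bounds proves the statement for all $t\ge 0$.

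I expect essentially no obstacle: the argument is a one-line manipulation of the defining supremum, using only that $\log$ is order preserving and that taking a supremum preserves inequalities between families indexed by the same set. The only points worth a word of care are the boundary case $t=0$ (covered by the convention $M(0)=0$) and the remark that neither the continuity nor the monotonicity of $M$ established after (\ref{e123}) is actually used here. If desired, one can record the symmetric reformulation $\tilde{M}(B_1 t)\le M(t)\le\tilde{M}(B_2 t)$, which follows by the same substitution and is the form most convenient in the subsequent estimates.
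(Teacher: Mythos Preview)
Your proof is correct and matches the paper's approach: the paper does not spell out a proof but simply states that the lemma is ``a direct consequence of the definition of the function $M$ in (\ref{e123}),'' which is exactly the content of your argument.
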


\begin{defin}
Let $(M_p)_{p\ge0}$ be a sequence of positive real numbers with $M_0=1$, and let $s\in\R$. A sequence of positive real numbers $(m(p))_{p\ge0}$ is said to be an $(M_p)$-sequence of order $s$ if there exist $A_3,A_4>0$ with 
\begin{equation}\label{e100}
A_3^p(M_p)^{s}\le m(p)\le A_4^p(M_p)^s,\quad p\ge0.
\end{equation}
\end{defin}

\subsection{Function spaces and generalized summability}\label{secfun}

In the whole subsection $(\mathbb{E},\left\|\cdot\right\|_{\mathbb{E}})$ stands for a complex Banach space.

\begin{defin}
Let $r,\,\theta>0$ and $d\in\R$. We also fix a sequence $\mathbb{M}$ of positive real numbers. The set $\mathcal{O}^{\mathbb{M}}(\hat{S}_d(\theta;r),\mathbb{E})$ consists of all functions $f\in\mathcal{O}(\hat{S}_d(\theta;r),\mathbb{E})$ such that for every $0<\theta'<\theta$ and $0<r'<r$ there exist $\tilde{c},\tilde{k}>0$ with
\begin{equation}\label{e144}
\left\|f(z)\right\|_{\mathbb{E}}\le \tilde{c}\exp\left(M\left(\frac{|z|}{\tilde{k}}\right)\right),\quad z\in \hat{S}_d(\theta',r').
\end{equation}
Analogously, the set $\mathcal{O}^{\mathbb{M}}(S_d(\theta),\mathbb{E})$ consists of all $f\in\mathcal{O}(S_d(\theta),\mathbb{E})$ such that for all $0<\theta'<\theta$, there exist $\tilde{c},\tilde{k}>0$ such that (\ref{e144}) holds for all $z\in S_d(\theta')$.
\end{defin}

The aforementioned definition generalizes that of functions of exponential growth at infinity of some positive order. Indeed, if $\mathbb{M}=\mathbb{M}_{\alpha}$ for some $\alpha>0$, then the property (\ref{e144}) determines that $f$ is of exponential growth at most $1/\alpha$.

The general moment summability methods developed by W. Balser  (see Section 5.5,~\cite{balser}) were adapted by J. Sanz to the framework of strongly regular sequences (see Section 5,~\cite{sanz}; or Definition 6.2.,~\cite{sanzproceedings}).

\begin{defin}
Let $\mathbb{M}$ be a strongly regular sequence with $\omega(\mathbb{M})<2$. Let $M$ be the function associated with $\mathbb{M}$, defined by (\ref{e123}). The complex functions $e,\,E$ define \textit{kernel functions for $\mathbb{M}$-summability} if the following properties hold:
\begin{enumerate}
\item $e\in\mathcal{O}(S_0(\omega(\mathbb{M})\pi))$. The function $e(z)/z$ is locally uniformly integrable at the origin, i.e., there exists $t_0>0$, and for all $z_0\in S_{0}(\omega(\mathbb{M})\pi)$ there exists a neighborhood $U$ of $z_0$, with $U\subseteq S_0(\omega(\mathbb{M})\pi)$, such that 
\begin{equation}\label{e192}
\int_0^{t_0}\frac{\sup_{z\in U}\left|e\left(\frac{t}{z}\right)\right|}{t}dt<\infty.
\end{equation}
Moreover, for all $\epsilon>0$ there exist $c, k>0$ such that
\begin{equation}\label{e162}
|e(z)|\le c\exp\left(-M\left(\frac{|z|}{k}\right)\right)\quad \hbox{ for all }z\in S_0(\omega(\mathbb{M})\pi-\epsilon).
\end{equation}
We also assume that $e(x)\in\R$ for all $x>0$.
\item $E\in\mathcal{O}(\C)$ and satisfies that 
\begin{equation}\label{e191}
|E(z)|\le \tilde{c}\exp\left(M\left(\frac{|z|}{\tilde{k}}\right)\right),\quad z\in \C,
\end{equation}
for some $\tilde{c},\,\tilde{k}>0$. There exists $\beta>0$ such that for all $0<\tilde{\theta}<2\pi-\omega(\mathbb{M})\pi$ and $M_E>0$, there exist $\tilde{c}_2>0$ with 
\begin{equation}\label{e202}
|E(z)|\le \frac{\tilde{c}_2}{|z|^{\beta}},\quad z\in S_\pi(\tilde{\theta})\setminus D(0,M_E).
\end{equation}
\item Both kernel functions are related via the Mellin transform of $e$. More precisely, the \textit{moment function} associated with $e$, defined by
\begin{equation}\label{e166}
m_{e}(z):=\int_{0}^{\infty} t^{z-1}e(t)dt
\end{equation}
is a complex continuous function in $\{z\in\C:\hbox{Re}(z)\ge 0\}$ and holomorphic in $\{z\in\C:\hbox{Re}(z)> 0\}$. The kernel function $E$ has the power series expansion at the origin given by
\begin{equation}\label{e167}
E(z)=\sum_{p\ge0}\frac{z^p}{m_{e}(p)},\quad z\in\C.
\end{equation}
\end{enumerate}
\end{defin}

\textbf{Remark:} In the remainder of the work we will only mention the kernel function $e$, rather than the pair $e,\,E$, as $E$ is determined by the knowledge of $e$ in terms of its Taylor expansion at the origin.

\vspace{0.3cm}

\textbf{Remark:} The growth condition of the kernel function $E(z)$ at infinity (\ref{e202}) is usually substituted in the literature (\cite{balser,michalik17fe,sanz,sanzproceedings}) by the less restrictive condition:

\textit{``The function $E(1/z)/z$ is locally uniformly integrable at the origin in the sector $S_{\pi}((2-\omega(\mathbb{M}))\pi)$. Namely, there exists $t_0>0$, and for all $z_0\in S_{\pi}((2-\omega(\mathbb{M}))\pi)$ there exists a neighborhood $U$ of $z_0$, $U\subseteq S_{\pi}((2-\omega(\mathbb{M}))\pi)$, such that
$$\int_0^{t_0}\frac{\sup_{z\in U}\left|E\left(\frac{z}{t}\right)\right|}{t}dt<\infty.\hbox{''}$$}

Condition (\ref{e202}) has already been used and justified in~\cite{jkls} (see Lemma 4.10, Remark 4.11 and Remark 4.12 in~\cite{jkls}), in order to obtain convolution kernels for multisummability.

\begin{defin}
Let $\mathbb{M}$ be a strongly regular sequence and let $e,\,E$ be a pair of kernel functions for $\mathbb{M}$-summability. Let $m_e$ be the moment function given by (\ref{e166}). The sequence $(m_{e}(p))_{p\ge0}$ is the so-called \textit{sequence of moments} associated with $e$.
\end{defin}

\textbf{Remark:} 
The previous definition can be adapted to the case $\omega(\mathbb{M})\ge2$ by means of a ramification of the kernels (see~\cite{sanzproceedings}, Remark 6.3 (iii)). For practical purposes, we will focus on the case that $\omega(\mathbb{M})<2$, taking into consideration that all the results can be adapted to the general case.

\vspace{0.3cm}

\textbf{Remark:}\label{r223}
Given a strongly regular sequence $\mathbb{M}$, the existence of pairs of kernel functions for $\mathbb{M}$-summability is guaranteed, provided that $\mathbb{M}$ admits a nonzero proximate order (see~\cite{jjs17,lastramaleksanz}). 

\vspace{0.3cm}

\begin{example}\label{ex187}
Let $\alpha>0$. We consider a Gevrey sequence $\mathbb{M}_{\alpha}$. Then, the functions $e_{\alpha}(z):=\frac{1}{\alpha}z^{\frac{1}{\alpha}}\exp\left(-z^{\frac{1}{\alpha}}\right)$ and $E_{\alpha}(z):=\sum_{p\ge0}\frac{z^p}{\Gamma(1+\alpha p)}$ are kernel functions for $\mathbb{M}_{\alpha}$-summability. Indeed, the moment function is given by $m_{\alpha}(z):=\Gamma(1+\alpha z)$.
\end{example}

The definition of moment differentiation, moment (formal) Borel and moment Laplace transformation generalize the classical concepts of differentiation, formal Borel and Laplace transformations, respectively. In the classical setting of the Gevrey sequence of order $\alpha>0$, the moment sequence is $(\Gamma(1+\alpha p))_{p\ge0}$ seen in Example~\ref{ex187}. Classical differentiation corresponds to $\alpha=1$. 

At this point, given a strongly regular sequence $\mathbb{M}$, one has that a sequence of moments can be constructed, provided a pair of kernel functions for $\mathbb{M}$-summability, say $e$ and $E$. The associated sequence of moments $m_e:=(m_e(p))_{p\ge0}$ is a strongly regular sequence (see~\cite{sanzproceedings}, Remark 6.6), which is equivalent to $\mathbb{M}$ (see~\cite{sanzproceedings}, Proposition 6.5). Therefore, $\omega(\mathbb{M})=\omega(m_{e})$. The definition of generalized derivatives is done in terms of a sequence of moments, rather than the initial sequence itself, and we will work directly  with this sequence, obviating the initial strongly regular sequence and the pair of kernel functions. Hereinafter, when referring to a sequence of moments, we will assume without mentioning that such sequence is indeed the sequence of moments associated with some pair of kernel functions, and therefore with a strongly regular sequence (in conditions that admit such pair of kernels, e.g., if the strongly regular sequence admits a nonzero proximate order).

Departing from a sequence of moments $m_e$, one can consider the formal $m_e$-moment Borel transform. This definition can be extended for any sequence of positive numbers, and not only to a sequence of moments. We present it in this way for the sake of clarity.

\begin{defin}\label{def259}
Let $m_e=(m_e(p))_{p\ge0}$ be a sequence of moments. The formal $m_e$-moment Borel transform $\hat{\mathcal{B}}_{m_e,z}:\mathbb{E}[[z]]\to\mathbb{E}[[z]]$ is defined by
$$\hat{\mathcal{B}}_{m_e,z}\left(\sum_{p\ge0}a_pz^p\right)=\sum_{p\ge0}\frac{a_p}{m_e(p)}z^p.$$
\end{defin}

There are several different equivalent approaches to the general moment summability of formal power series. We refer to~\cite{balser}, Section 6.5, under Gevrey-like settings, and~\cite{sanzproceedings}, Section 6, in the framework of strongly regular sequences.  

\begin{defin}\label{defi271}
Let $\mathbb{M}$ be a strongly regular sequence admitting a nonzero proximate order. The formal power series $\hat{u}\in\mathbb{E}[[z]]$ is $\mathbb{M}$-summable in direction $d\in\R$ if the formal power series $\hat{\mathcal{B}}_{\mathbb{M},z}(\hat{u}(z))$ is convergent in a neighborhood of the origin and can be extended to an infinite sector of bisecting direction $d$, say $\hat{S}_d$, such that the extension belongs to $\mathcal{O}^{\mathbb{M}}(\hat{S}_d,\mathbb{E})$. We write $\mathbb{E}\{z\}_{\mathbb{M},d}$ for the set of $m_e$-summable formal power series in $\mathbb{E}[[z]]$. Here we have assumed that $e$ is a kernel function for $\mathbb{M}$-summability and $m_e$ is its associated sequence of moments.   
\end{defin}

\textbf{Remark:} We recall that, given a sequence of moments associated with a strongly regular sequence $\mathbb{M}$ via a kernel function $e$, say $m_e$, $\mathbb{M}$ and $m_e$ are equivalent sequences. Regarding Lemma~\ref{lemma2} and the definition of the formal Borel transformation, it is easy to check that the set $\mathbb{E}\{z\}_{m_e,d}$ does not depend on the choice of the kernel function $e$, and therefore one can write $\mathbb{E}\{z\}_{\mathbb{M},d}:=\mathbb{E}\{z\}_{m_e,d}$ for any choice of kernel function $e$. Moreover, we observe that the formal power series $\hat{\mathcal{B}}_{m_e,z}(\hat{u})$ has a positive radius of convergence with independence of the kernel function considered, associated with $\mathbb{M}$.

\vspace{0.3cm}

The statements in the next proposition can be found in detail in~\cite{sanzproceedings}, Section 6.

\begin{prop}\label{prop316}
Let $d\in\R$ and let $e,\,E$ be a pair of kernel functions for $\mathbb{M}$-summability. Let $\theta>0$. For every $f\in\mathcal{O}^{\mathbb{M}}(S_d(\theta),\mathbb{E})$, the $e$-\emph{Laplace transform} of $f$ along a direction $\tau\in\hbox{arg}(S_d(\theta))$ is defined by
$$(T_{e,\tau}f)(z)=\int_0^{\infty(\tau)}e(u/z)f(u)\frac{du}{u},$$
for $|\hbox{arg}(z)-\tau|<\omega(\mathbb{M})\pi/2$, and $|z|$ small enough. The variation of $\tau\in\hbox{arg}(S_d)$ defines a function denoted by $T_{e,d}f$ in a sectorial region $G_d(\theta+\omega(\mathbb{M})\pi)$.

Under the assumption that $\omega(\mathbb{M})<2$ let $G=G_d(\theta)$ be a sectorial region with $\theta>\omega(\mathbb{M})\pi$. Given $f\in\mathcal{O}(G,\mathbb{E})$ and continuous at 0, and $\tau\in\R$ with $|\tau-d|<(\theta-\omega(\mathbb{M})\pi)/2$, the operator $T^{-}_{e,\tau}$, known as the $e$-\emph{Borel transform} along direction $\tau$ is defined by
$$(T^{-}_{e,\tau}f)(u):=\frac{-1}{2\pi i}\int_{\delta_{\omega(\mathbb{M})}(\tau)}E(u/z)f(z)\frac{dz}{z},\quad u\in S_{\tau},$$
where $S_{\tau}$ is an infinite sector of bisecting direction $\tau$ and small enough opening, and $\delta_{\omega(\mathbb{M})}(\tau)$ is the Borel-like path consisting of the concatenation of a segment from the origin to a point $z_0$ with $\hbox{arg}(z_0)=\tau+\omega(\mathbb{M})(\pi+\epsilon)/2$, for some small enough $\epsilon\in(0,\pi)$, followed with the arc of circle centered at 0, joining $z_0$ and the point $z_1$, with $\hbox{arg}(z_1)=\tau-\omega(\mathbb{M})(\pi+\epsilon)/2$, clockwise, and concluding with the segment of endpoints  $z_1$ and the origin.

Let $G_{d}(\theta)$ and $f$ be as above. The family $\{T^{-}_{e,\tau}\}_{\tau}$, with $\tau$ varying among the real numbers with $|\tau-d|<(\theta-\omega(\mathbb{M})\pi)/2$ defines a holomorphic function denoted by $T^{-}_{e,d}f$ in the sector $S_{d}(\theta-\omega(\mathbb{M})\pi)$ and $T^{-}_{e,d}f\in\mathcal{O}^{\mathbb{M}}(S_{d}(\theta-\omega(\mathbb{M})\pi),\mathbb{E})$.
\end{prop}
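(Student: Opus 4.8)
The plan is to adapt Balser's classical treatment of Laplace and Borel operators (Section~5.5 of~\cite{balser}) to the strongly regular framework, following~\cite{sanzproceedings}, Section~6, handling in turn the well-definedness of $T_{e,\tau}f$ and the patching into $T_{e,d}f$, the well-definedness of $T^-_{e,\tau}f$ and the patching into $T^-_{e,d}f$, and finally the growth estimate $T^-_{e,d}f\in\mathcal{O}^{\mathbb{M}}(S_d(\theta-\omega(\mathbb{M})\pi),\mathbb{E})$.

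For the Laplace part, I would fix $\tau\in\hbox{arg}(S_d(\theta))$ and $\theta'<\theta$ with $\tau\in\hbox{arg}(S_d(\theta'))$, so that~(\ref{e144}) gives $\|f(u)\|_{\mathbb{E}}\le\tilde c\exp(M(|u|/\tilde k))$ on the ray $\arg u=\tau$. For $z$ with $|\arg z-\tau|<\omega(\mathbb{M})\pi/2-\epsilon'$ the quotient $u/z$ stays in $S_0(\omega(\mathbb{M})\pi-\epsilon)$, so~(\ref{e162}) gives $|e(u/z)|\le c\exp(-M(|u|/(k|z|)))$; applying Lemma~\ref{lemaaux1} with $s=2$ and then shrinking $|z|$ so that $k\rho(2)|z|\le\tilde k$ bounds the integrand on the tail by a constant times $\exp(-M(|u|/\tilde k))|u|^{-1}$, which is integrable, while near the origin one invokes~(\ref{e192}). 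This yields absolute convergence, and holomorphy (resp. continuity at $0$) follows by the usual Morera/dominated-convergence argument. To patch, I would take two admissible directions $\tau_1<\tau_2$ and apply Cauchy's theorem on the sector they span truncated by an arc of radius $R$: since the decay of $e$ beats the growth of $f$ on that arc (again via the $\rho(s)$ trick), its contribution vanishes as $R\to\infty$, so the $T_{e,\tau}f$ agree on overlaps and glue into a holomorphic function on $G_d(\theta+\omega(\mathbb{M})\pi)$, the extra $\omega(\mathbb{M})\pi$ coming from the admissible range of $\arg z$ about each $\tau$.

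For the Borel part, I would fix $\tau$ with $|\tau-d|<(\theta-\omega(\mathbb{M})\pi)/2$ and $\epsilon\in(0,\pi)$ small enough that $\omega(\mathbb{M})(\pi+\epsilon)<\theta$, so the path $\delta_{\omega(\mathbb{M})}(\tau)$ lies in $G$ and $f$, being continuous at $0$, is bounded on the truncated region containing it. For $u$ in a narrow sector $S_\tau$, along the two segments the quotient $u/z$ runs through a sector about $\arg=\pi$ of opening $<2\pi-\omega(\mathbb{M})\pi$, so~(\ref{e202}) applies for small $|z|$ and gives $|E(u/z)/z|\le\tilde c_2|z|^{\beta-1}|u|^{-\beta}$, integrable at the origin, while on the bounded remaining part~(\ref{e191}) together with boundedness of $f$ gives an integrable bound; hence $T^-_{e,\tau}f$ is well-defined, and holomorphic by differentiation under the integral sign. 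Cauchy's theorem gives independence of $z_0$, $\epsilon$ and the radius, and patching over $\tau$ as before yields a holomorphic $T^-_{e,d}f$ on $S_d(\theta-\omega(\mathbb{M})\pi)$. For the growth estimate — the crux — I would fix the radius $\delta>0$ of the path uniformly for $|\tau-d|\le\theta''/2$ (any $0<\theta''<\theta-\omega(\mathbb{M})\pi$) so small that the path remains in $G$: for $|u|\ge M_E\delta$ both segments satisfy $|u/z|\ge M_E$ with $u/z$ in the range of~(\ref{e202}), contributing $O(|u|^{-\beta})$, whereas on the arc $|z|=\delta$ only~(\ref{e191}) is available, giving $|E(u/z)|\le\tilde c\exp(M(|u|/(\tilde k\delta)))$ and, the arc having bounded length, a bound $\|(T^-_{e,\tau}f)(u)\|_{\mathbb{E}}\le C\exp(M(|u|/(\tilde k\delta)))$; combined with the trivial bound for $|u|\le M_E\delta$ this is exactly the $\mathcal{O}^{\mathbb{M}}$ condition.

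The main obstacle I expect is the careful bookkeeping of arguments: one must ensure that $u/z$ lands precisely in the sectors where~(\ref{e162}) and~(\ref{e202}) are valid while $z$ simultaneously stays in the prescribed sectorial region, and it is exactly this constraint that pins down the openings $\theta+\omega(\mathbb{M})\pi$ and $\theta-\omega(\mathbb{M})\pi$; the other recurring technical point is the systematic use of Lemma~\ref{lemaaux1} to let the decay of $e$ dominate the growth of $f$ after shrinking $|z|$, both for convergence and for the contour-deformation steps.
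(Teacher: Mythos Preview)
The paper does not supply a proof of this proposition: the sentence immediately preceding it reads ``The statements in the next proposition can be found in detail in~\cite{sanzproceedings}, Section 6,'' and the text moves on directly to a remark and Theorem~\ref{teo324}. So there is no in-paper argument to compare against.

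Your sketch is a faithful outline of the standard proof one finds in that reference (and in Balser's Gevrey prototype). The ingredients you single out --- the decay bound~(\ref{e162}) for $e$ combined with Lemma~\ref{lemaaux1} to dominate the $\mathbb{M}$-growth of $f$ once $|z|$ is small, the local integrability~(\ref{e192}) near the origin, Cauchy's theorem on truncated sectors for the patching, and for the Borel side the pairing of the polynomial decay~(\ref{e202}) on the radial segments with the global bound~(\ref{e191}) on the circular arc --- are exactly the right ones, and your accounting of why the openings come out as $\theta\pm\omega(\mathbb{M})\pi$ is correct. One small slip: after the $\rho(2)$ trick the surviving exponential factor is $\exp(-M(|u|/(k\rho(2)|z|)))$ rather than $\exp(-M(|u|/\tilde{k}))$, but this is cosmetic and does not affect integrability.
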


\textbf{Remark:} We recall that if $\lambda\in\C$, with $\hbox{Re}(\lambda)\ge0$, then $T^{-}_{e,d}(u\mapsto u^{\lambda})(z)=\frac{z^{\lambda}}{m_e(\lambda)}$, which relates $T^{-}_{e,d}$ with the formal Borel operator $\hat{\mathcal{B}}_{m_e,u}$.

Theorem 30~\cite{balser} can be adapted to the strongly regular sequence framework, under minor modifications.

\begin{theo}\label{teo324}
Let $S=S_d(\theta)$ for some $\theta>0$. Let $\mathbb{M}$ be a strongly regular sequence with $\omega(\mathbb{M})<2$ admitting a nonzero proximate order, and choose a kernel function for $\mathbb{M}$-summability $e$. Let $f\in\mathcal{O}^{\mathbb{M}}(S,\mathbb{E})$ and define $g(z)=(T_{e,d}f)(z)$ for $z$ in a sectorial region $G_{d}(\theta+\omega(\mathbb{M})\pi))$. Then it holds that $f\equiv T^{-}_{e,d}g$.
\end{theo}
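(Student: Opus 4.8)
The statement is a generalized moment-summability analogue of Balser's classical reconstruction theorem: applying the $e$-Laplace operator $T_{e,d}$ to $f \in \mathcal{O}^{\mathbb{M}}(S,\mathbb{E})$ and then the $e$-Borel operator $T^{-}_{e,d}$ returns $f$. The plan is to proceed by unfolding both integral representations along a fixed admissible direction $\tau$ and exchanging the order of integration, so that the composite operator becomes convolution against a kernel which, by property (3) in the definition of kernel functions (the Mellin pairing $m_e(z) = \int_0^\infty t^{z-1}e(t)\,dt$ together with $E(z) = \sum_{p\ge0} z^p/m_e(p)$), collapses to the identity.

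First I would fix $\tau \in \hbox{arg}(S_d(\theta))$ and recall from Proposition~\ref{prop316} that $g = T_{e,d}f$ is well-defined and holomorphic on the sectorial region $G_d(\theta+\omega(\mathbb{M})\pi)$, with $g(z) = \int_0^{\infty(\tau)} e(u/z)f(u)\,\frac{du}{u}$; moreover $g$ is continuous at $0$ (a consequence of the growth bound~(\ref{e162}) for $e$ and the local uniform integrability~(\ref{e192})). Then, for $\tau'$ close to $\tau$ and $w \in S_{\tau'}$ of small modulus, I would write
\[
(T^{-}_{e,\tau'}g)(w) = \frac{-1}{2\pi i}\int_{\delta_{\omega(\mathbb{M})}(\tau')} E(w/z)\,g(z)\,\frac{dz}{z} = \frac{-1}{2\pi i}\int_{\delta_{\omega(\mathbb{M})}(\tau')} E(w/z)\left(\int_0^{\infty(\tau)} e(u/z)f(u)\,\frac{du}{u}\right)\frac{dz}{z}.
\]
The key step is to justify Fubini's theorem here: the inner integrand decays like $\exp(-M(|u|/k))$ times the polynomial-order growth~(\ref{e191})--(\ref{e202}) of $E$, and on the Borel path $\delta_{\omega(\mathbb{M})}(\tau')$ one exploits that the bisecting direction of the path sits at angular distance roughly $\omega(\mathbb{M})\pi/2$ from $\tau$, so that $e(u/z)$ is evaluated in a subsector of $S_0(\omega(\mathbb{M})\pi)$ where~(\ref{e162}) applies and forces absolute convergence of the double integral near $w=0$. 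After exchanging, one is left with $\int_0^{\infty(\tau)} f(u)\,C(w,u)\,\frac{du}{u}$ where $C(w,u) = \frac{-1}{2\pi i}\int_{\delta_{\omega(\mathbb{M})}(\tau')} E(w/z)\,e(u/z)\,\frac{dz}{z}$.

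It then remains to show $C(w,u) = \frac{w}{u}\,\delta$-like behaviour, i.e. that convolution against $C(w,\cdot)/\cdot$ is the identity on $\mathcal{O}^{\mathbb{M}}$. Here I would substitute the Taylor expansion $E(w/z) = \sum_{p\ge0} (w/z)^p/m_e(p)$, interchange sum and contour integral (uniform convergence on the path, since $|w/z|$ stays bounded), and deform $\delta_{\omega(\mathbb{M})}(\tau')$ to recognize $\frac{-1}{2\pi i}\int z^{-p-1} e(u/z)\,dz$ as, up to the substitution $z \mapsto u/z$ and a Hankel-type contour argument, the reciprocal Mellin moment — concretely, using the Remark after Proposition~\ref{prop316} that $T^{-}_{e,d}(u\mapsto u^{\lambda})(z) = z^{\lambda}/m_e(\lambda)$, one gets that the kernel acts on each monomial $u^{p}$ correctly, and since $f$ is determined by (a Laplace transform whose Borel transform recovers) its behaviour against these monomials, the identity $T^{-}_{e,d}g = f$ follows on $S_d(\theta)$ by analytic continuation and the independence-of-$\tau$ statements already recorded in Proposition~\ref{prop316}.

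The main obstacle I anticipate is the Fubini justification together with the contour deformations: one must carefully track that the Borel path $\delta_{\omega(\mathbb{M})}(\tau')$ can be chosen (by selecting $\epsilon$ small) so that along it $u/z$ lies in a sector $S_0(\omega(\mathbb{M})\pi - \epsilon')$ where the sharp decay~(\ref{e162}) of $e$ holds \emph{and simultaneously} $w/z$ avoids the directions where $E$ grows fastest, all while keeping $|w|$ in a range guaranteeing convergence — this is exactly the delicate interplay between the opening constraint $\omega(\mathbb{M})<2$, the aperture of the Laplace integral, and the aperture of the Borel path, and it is where the strongly-regular-sequence adaptation of Balser's Gevrey argument requires the estimates of Section~\ref{sec31} (in particular Lemma~\ref{lemaaux1} to absorb the constants $k$ appearing in the exponents). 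Once the double integral is shown absolutely convergent, the algebraic collapse to the identity is essentially formal, driven entirely by property (3) of the kernel pair.
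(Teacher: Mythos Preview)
The paper does not give its own proof of this theorem: it is stated immediately after the sentence ``Theorem 30~\cite{balser} can be adapted to the strongly regular sequence framework, under minor modifications,'' and no proof follows. Your plan is precisely that adaptation --- unfold the composite $T^{-}_{e,\tau'}\circ T_{e,\tau}$ as a double integral, justify Fubini via the decay (\ref{e162}) of $e$ along the Borel path and the growth bounds (\ref{e191})--(\ref{e202}) for $E$, then collapse the inner contour integral using the Mellin duality in property~(3) of the kernel pair --- which is exactly Balser's argument in the Gevrey case, with Lemma~\ref{lemaaux1} supplying the requisite comparison of the exponential weights $\exp(\pm M(\cdot))$ in place of explicit $\exp(\pm|z|^{1/\alpha})$ estimates. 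So your approach is correct and is the one the paper tacitly invokes.
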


The following is an equivalent of $\mathbb{M}$-summable formal power series (see Theorem 6.18,~\cite{sanzproceedings}).

\begin{theo}\label{teo1}
Let $\mathbb{M}=(M_p)_{p\ge0}$ be a strongly regular sequence admitting a nonzero proximate order. Let $\hat{u}=\sum_{p\ge0}u_pz^p\in\mathbb{E}[[z]]$ and $d\in\R$. The following statements are equivalent:
\begin{itemize}
\item[(a)] $\hat{u}(z)$ is $\mathbb{M}$-summable in direction $d$.
\item[(b)] There exists a sectorial region $G_d(\theta)$ with $\theta>\omega(\mathbb{M})\pi$ and $u\in\mathcal{O}(G_d(\theta),\mathbb{E})$ such that for all $0<\theta'<\theta$, $S_d(\theta';r)\subseteq G_d(\theta)$ and all integers $N\ge1$, there exist $C,\,A>0$ with
$$\left\|u(z)-\sum_{p=0}^{N-1}u_{p}z^p\right\|_{\mathbb{E}}\le C A^{N}M_N|z|^{N},\qquad z\in S_d(\theta';r).$$
\end{itemize}
If one of the previous equivalent statements holds, the function $u$ in Definition~\ref{defi271} can be constructed as the $e$-Laplace transform of $\hat{\mathcal{B}}_{\mathbb{M},z}(\hat{u}(z))$ along direction $\tau\in\hbox{arg}(S_d)$.

The previous construction can be done for, and it is independent of, any choice of the kernel for $\mathbb{M}$-summability $e$.
\end{theo}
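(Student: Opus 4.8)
The statement to prove is Theorem~\ref{teo1}, which characterizes $\mathbb{M}$-summability of a formal power series $\hat{u}$ in direction $d$ via the existence of a holomorphic function $u$ on a sectorial region with $\mathbb{M}$-asymptotic expansion $\hat{u}$ on subsectors. This is the strongly regular sequence analogue of Balser's classical characterization of $k$-summability, so the plan is to follow that well-trodden route while keeping track of the sequence $\mathbb{M}$ (equivalently its associated moment sequence $m_e$) throughout.

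\textbf{Proof plan.}
First I would fix a kernel function $e$ for $\mathbb{M}$-summability, with associated moment sequence $m_e$ and companion kernel $E$, and recall that $\mathbb{M}$ and $m_e$ are equivalent strongly regular sequences, so that by Lemma~\ref{lemma2} the growth function $M(t)$ controls everything up to the harmless constants $B_1,B_2$. The proof then splits into the two implications.

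For $(a)\Rightarrow(b)$: assume $\hat{u}$ is $\mathbb{M}$-summable in direction $d$, so $\hat{v}:=\hat{\mathcal{B}}_{\mathbb{M},z}(\hat{u})$ converges near $0$ and extends to $v\in\mathcal{O}^{\mathbb{M}}(\hat{S}_d,\mathbb{E})$. Define $u:=T_{e,d}v$, which by Proposition~\ref{prop316} is holomorphic on a sectorial region $G_d(\theta)$ with $\theta>\omega(\mathbb{M})\pi$. The job is to estimate the remainder $u(z)-\sum_{p=0}^{N-1}u_pz^p$. Writing $v(u)=\sum_{p=0}^{N-1}\frac{u_p}{m_e(p)}u^p + r_N(u)$ with $r_N$ the tail, one uses linearity of the $e$-Laplace transform together with the moment identity $T_{e,d}(u\mapsto u^p)=m_e(p)z^p$ (equivalently $(T_{e,\tau}f)(z)=\int_0^{\infty(\tau)}e(u/z)f(u)\,du/u$ applied termwise) to get $u(z)-\sum_{p=0}^{N-1}u_pz^p = \int_0^{\infty(\tau)} e(u/z) r_N(u)\,\frac{du}{u}$. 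Now I would bound $\|r_N(u)\|_{\mathbb{E}}$: since $v\in\mathcal{O}^{\mathbb{M}}$, Cauchy estimates on circles of radius $\sim N$-dependent size, combined with the definition~(\ref{e123}) of $M$ (which gives $t^N/M_N\le e^{M(t)}$, i.e. $M_N\ge t^N e^{-M(t)}$ optimized in $t$), yield $\|r_N(u)\|_{\mathbb{E}}\le C A^N |u|^N / M_N$ times an exponential factor $\exp(M(|u|/k))$ for $|u|$ large. Then plug in the kernel estimate~(\ref{e162}), $|e(u/z)|\le c\exp(-M(|u|/(k'|z|)))$, choose the ray $\tau$ appropriately inside $\mathrm{arg}(S_d)$, and use Lemma~\ref{lemaaux1} to absorb the product of the two exponentials (rescaling $s$ so that $\exp(M(|u|/(k'|z|)))\exp(-M(|u|/k''))$ decays for $|z|$ small). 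Splitting the integral at $|u|\asymp N|z|$ and using the elementary bound $\int_0^\infty t^{N-1}e^{-(t/\rho)^{1/\beta}}dt \lesssim \rho^N \Gamma(\beta N+\cdots)$-type estimate — or more cleanly, the equivalence between $m_e$ and $\mathbb{M}$ and the fact that $m_e(N)=\int_0^\infty t^{N-1}e(t)\,dt$ — produces the desired bound $C A^N M_N |z|^N$, uniformly on $S_d(\theta';r)$ for each $0<\theta'<\theta$. This direction is essentially Balser's Theorem 30 adapted via Lemma~\ref{lemaaux1}; Theorem~\ref{teo324} guarantees the consistency $v=T^-_{e,d}u$.

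For $(b)\Rightarrow(a)$: given $u\in\mathcal{O}(G_d(\theta))$ with the stated remainder estimates, define $v:=T^-_{e,d}u$ via the $e$-Borel transform of Proposition~\ref{prop316}, so that $v\in\mathcal{O}^{\mathbb{M}}(S_d(\theta-\omega(\mathbb{M})\pi),\mathbb{E})$ and $\theta-\omega(\mathbb{M})\pi>0$. Using the remark after Proposition~\ref{prop316} that $T^-_{e,d}(u\mapsto u^\lambda)(z)=z^\lambda/m_e(\lambda)$, applied to the partial sums, plus the remainder bound $\|u(z)-\sum_{p=0}^{N-1}u_pz^p\|\le CA^NM_N|z|^N$ inserted into the contour integral defining $T^-_{e,\tau}$, one shows that near the origin $v(u)$ coincides with $\sum_{p\ge0}\frac{u_p}{m_e(p)}u^p=\hat{\mathcal{B}}_{\mathbb{M},z}(\hat u)(u)$, in particular this series converges; and the global extension to $\hat S_d$ lying in $\mathcal{O}^{\mathbb{M}}$ is exactly the content of Proposition~\ref{prop316}. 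Hence $\hat u$ is $\mathbb{M}$-summable in direction $d$ with sum $T_{e,d}(\hat{\mathcal{B}}_{\mathbb{M},z}\hat u)$, which by Theorem~\ref{teo324} equals $u$; and independence of the kernel $e$ follows from the last Remark before Proposition~\ref{prop316} together with Lemma~\ref{lemma2}, since two moment sequences attached to $\mathbb{M}$ are equivalent and the Borel/Laplace constructions match up to rescaling.

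\textbf{Main obstacle.} The delicate point is the quantitative remainder estimate in $(a)\Rightarrow(b)$: optimizing the splitting radius of the Laplace integral and correctly matching the exponential growth of the tail $r_N$ against the exponential decay of the kernel $e$, so that the surviving polynomial-in-$|u|$ integral reproduces precisely the factor $M_N$ (and not some equivalent-but-different sequence). This is where Lemma~\ref{lemaaux1} and the equivalence $\mathbb{M}\sim m_e$ via Lemma~\ref{lemma2} do the real work, and where the hypothesis $\omega(\mathbb{M})<2$ (so that the opening $\omega(\mathbb{M})\pi$ of the Laplace operator stays below $2\pi$ and the relevant sectors make sense) is used; the converse direction is comparatively soft, being mostly a matter of quoting Proposition~\ref{prop316} and the termwise action of $T^-_{e,d}$.
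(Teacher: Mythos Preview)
The paper does not actually prove Theorem~\ref{teo1}: it is quoted as a known result, with the reference ``see Theorem 6.18,~\cite{sanzproceedings}'' given immediately before the statement. So there is no in-paper proof to compare your proposal against.

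That said, your plan is the right one and is essentially how the result is established in the cited reference: the direction $(a)\Rightarrow(b)$ is obtained by writing $u=T_{e,d}(\hat{\mathcal{B}}_{\mathbb{M},z}\hat u)$ and estimating the Laplace integral of the tail using the kernel decay~(\ref{e162}) together with Lemma~\ref{lemaaux1}, while $(b)\Rightarrow(a)$ goes through the $e$-Borel transform of Proposition~\ref{prop316} and the termwise identity $T^{-}_{e,d}(z^\lambda)=u^\lambda/m_e(\lambda)$. One small correction: the uniqueness and kernel-independence of $u$ are not consequences of Lemma~\ref{lemma2} alone but rely on the generalized Watson lemma (Corollary~4.30 in~\cite{sanzproceedings}, as the paper notes just after the theorem), which is what forces any two candidate sums on a sector of opening $>\omega(\mathbb{M})\pi$ with the same $\mathbb{M}$-asymptotic expansion to coincide.
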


The function $u$  satisfying the previous equivalent properties is unique (see Corollary 4.30~\cite{sanzproceedings}), and it is known as the \textit{$\mathbb{M}$-sum} of $\hat{u}$ along direction $d$. We write $\mathcal{S}_{\mathbb{M},d}(\hat{u})$ for the $\mathbb{M}$-sum of $\hat{u}$ along direction $d$.

The concept of a moment differential operator was put forward by W. Balser and M. Yoshino in~\cite{BY}, and extended to $m_e$-moment differential operators in~\cite{LMS}, which leans on moment sequences of some positive order. 

\begin{defin}\label{def260}
Let $(\mathbb{E},\left\|\cdot\right\|_{\mathbb{E}})$ be a complex Banach space. Given a sequence of moments $(m_{e}(p))_{p\ge0}$, the $m_e$-moment differentiation $\partial_{m_e,z}$ is the linear operator $\partial_{m_e,z}:\mathbb{E}[[z]]\to\mathbb{E}[[z]]$ defined by
$$\partial_{m_e,z}\left(\sum_{p\ge0}\frac{u_{p}}{m_e(p)}z^p\right):=\sum_{p\ge0}\frac{u_{p+1}}{m_e(p)}z^p.$$
\end{defin}

This definition can be naturally extended to $f\in\mathcal{O}(D,\mathbb{E})$, for some complex Banach space $(\mathbb{E},\left\|\cdot\right\|_{\mathbb{E}})$, and any neighborhood of the origin $D$, by applying the definition of $
\partial_{m_e,z}$ to the Taylor expansion of $f$ at the origin. Moreover, one defines the linear operator $\partial_{m_e,z}^{-1}:\mathbb{E}[[z]]\to\mathbb{E}[[z]]$ as the inverse operator of $\partial_{m_e,z}$, i.e. $\partial_{m_e,z}^{-1}(z^j)=\frac{m_e(j)}{m_e(j+1)}z^{j+1}$ for every $j\ge0$.

\begin{lemma}\label{lema268}
Let $m_1=(m_1(p))_{p\ge0},\,m_2=(m_2(p))_{p\ge0}$ be two sequences of moments. The following statements hold:
\begin{itemize}
\item The sequence of products $m_1m_2:=(m_1(p)m_2(p))_{p\ge0}$ is a sequence of moments.
\item $\hat{\mathcal{B}}_{m_1,z}\circ \partial_{m_2,z}\equiv \partial_{m_1m_2,z}\circ\hat{\mathcal{B}}_{m_1,z}$ as operators defined in $\mathbb{E}[[z]]$.
\end{itemize}
\end{lemma}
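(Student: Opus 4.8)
The plan is to prove both statements directly from the definitions, since each reduces to an elementary manipulation of the coefficient-wise formulas. For the first bullet, recall that a sequence of moments is, by the conventions adopted in the excerpt, the sequence $(m_e(p))_{p\ge0}$ attached to some pair of kernel functions for $\mathbb{M}$-summability, equivalently the sequence of moments associated with a strongly regular sequence admitting a nonzero proximate order. So I would argue as follows: writing $m_1 = m_{e_1}$ and $m_2 = m_{e_2}$ with $e_1, e_2$ kernels for $\mathbb{M}_1$- and $\mathbb{M}_2$-summability respectively, the sequences $m_1$ and $m_2$ are strongly regular and equivalent to $\mathbb{M}_1$ and $\mathbb{M}_2$. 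I would then check that the product $\mathbb{M}_1\mathbb{M}_2 := (M_{1,p}M_{2,p})_{p\ge0}$ is strongly regular: logarithmic convexity of a product of $(lc)$ sequences is immediate from multiplying the two inequalities $M_{1,p}^2\le M_{1,p-1}M_{1,p+1}$ and $M_{2,p}^2\le M_{2,p-1}M_{2,p+1}$; moderate growth follows by multiplying the two $(mg)$ estimates with constant $A_1 = A_{1,1}A_{1,2}$; and $(snq)$ for $\mathbb{M}_1\mathbb{M}_2$ follows because $(lc)$ plus $(mg)$ already imply $(snq)$ (alternatively, by a comparison $\frac{M_{1,q}M_{2,q}}{(q+1)M_{1,q+1}M_{2,q+1}} \le \text{const}\cdot\frac{M_{1,q}}{(q+1)M_{1,q+1}}$ using that $M_{2,q+1}\ge M_{2,q}$ up to a constant from $(mg)$/$(lc)$). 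Since $m_1m_2$ is equivalent to $\mathbb{M}_1\mathbb{M}_2$ (multiply the two instances of \eqref{e100}-type or equivalence estimates), it is itself strongly regular; and one must note $\omega(\mathbb{M}_1\mathbb{M}_2)<2$ may fail, but the excerpt already records that the definitions extend to $\omega\ge2$ by ramification, so $m_1m_2$ is a legitimate sequence of moments in that generalized sense, with its own kernel pair guaranteed by the proximate-order discussion (Remark following Example~\ref{ex187}).

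For the second bullet, I would simply compute both sides on a generic monomial $z^p$, or rather on a generic formal series written in the natural normalization. Take $\hat{u} = \sum_{p\ge0} a_p z^p \in \mathbb{E}[[z]]$. On one hand, $\partial_{m_2,z}\hat{u}$ is, by Definition~\ref{def260} applied after rewriting $a_p = u_p/m_2(p)$ with $u_p = a_p m_2(p)$, the series $\sum_{p\ge0} \frac{u_{p+1}}{m_2(p)} z^p = \sum_{p\ge0}\frac{a_{p+1} m_2(p+1)}{m_2(p)} z^p$; applying $\hat{\mathcal{B}}_{m_1,z}$ then divides the $p$-th coefficient by $m_1(p)$, yielding $\sum_{p\ge0} \frac{a_{p+1} m_2(p+1)}{m_1(p) m_2(p)} z^p$. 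On the other hand, $\hat{\mathcal{B}}_{m_1,z}\hat{u} = \sum_{p\ge0}\frac{a_p}{m_1(p)} z^p$, and then $\partial_{m_1m_2,z}$ applied to this: writing $\frac{a_p}{m_1(p)} = \frac{b_p}{m_1(p)m_2(p)}$ with $b_p = a_p m_2(p)$, the definition of $\partial_{m_1m_2,z}$ gives $\sum_{p\ge0}\frac{b_{p+1}}{m_1(p)m_2(p)} z^p = \sum_{p\ge0}\frac{a_{p+1} m_2(p+1)}{m_1(p) m_2(p)} z^p$. The two expressions coincide coefficient by coefficient, which proves the operator identity on $\mathbb{E}[[z]]$; linearity of both compositions makes this monomial-level check sufficient.

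The only genuinely delicate point — and the one I would flag as the main obstacle — is the first bullet's claim that $m_1 m_2$ is \emph{a sequence of moments}, not merely strongly regular: one must either exhibit a kernel pair for it or invoke the structural result that the sequence of moments associated with any strongly regular sequence (admitting a nonzero proximate order) is equivalent to it, together with the fact that strong regularity plus admitting a nonzero proximate order is preserved under products. That $\mathbb{M}_1\mathbb{M}_2$ admits a nonzero proximate order when $\mathbb{M}_1, \mathbb{M}_2$ do is the subtle ingredient; if the paper wants to avoid it, the cleanest route is to cite the relevant results on proximate orders of products (as in~\cite{jjs17,lastramaleksanz}) or simply to state the bullet under the standing hypothesis, already in force throughout the section, that all sequences of moments under consideration arise from strongly regular sequences admitting nonzero proximate orders, so that their products do as well. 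The verification that $\omega(\mathbb{M}_1\mathbb{M}_2)$ behaves well (e.g.\ equals $\omega(\mathbb{M}_1)+\omega(\mathbb{M}_2)$ under the admissibility condition recalled after Lemma~\ref{lemaaux1}) can be added for completeness but is not needed for the bare statement.
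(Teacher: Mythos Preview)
Your treatment of the second bullet is correct and is exactly what the paper does: both sides are computed coefficientwise from Definitions~\ref{def259} and~\ref{def260}, and they agree.

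For the first bullet, the paper does not argue from scratch; it simply invokes Proposition~4.15 of~\cite{jkls}, which constructs a kernel pair whose associated moment sequence is precisely $m_1m_2$. Your attempt to rebuild this has two issues. First, the parenthetical claim that $(lc)$ and $(mg)$ together imply $(snq)$ is not a standard fact and should not be asserted without proof. Second, your alternative comparison for $(snq)$ does not close: bounding $\frac{M_{2,q}}{M_{2,q+1}}\le\text{const}$ and then applying $(snq)$ for $\mathbb{M}_1$ leaves you with $A_2\frac{M_{1,p}}{M_{1,p+1}}$, whereas the target is $A_2'\frac{M_{1,p}M_{2,p}}{M_{1,p+1}M_{2,p+1}}$, and passing from the former to the latter would require $\frac{M_{2,p+1}}{M_{2,p}}$ to be bounded, which fails already for Gevrey sequences. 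The fix is easy---use instead that $(lc)$ gives $\frac{M_{2,q}}{M_{2,q+1}}\le\frac{M_{2,p}}{M_{2,p+1}}$ for $q\ge p$, pull that factor out of the sum, and then apply $(snq)$ for $\mathbb{M}_1$---but as written the step is wrong.

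More substantively, you correctly flag that strong regularity of $m_1m_2$ is not the whole story: ``sequence of moments'' in this paper means the Mellin moment sequence of an actual kernel $e$, so one must either exhibit such a kernel for $m_1m_2$ or invoke the proximate-order existence machinery. That construction is exactly the content of the cited result in~\cite{jkls}, and it is not something one can sidestep by checking $(lc)$, $(mg)$, $(snq)$ alone. Your suggestion to cite the relevant external results is therefore the right instinct, and it is what the paper does.
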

\begin{proof}
The first part is a direct consequence of Proposition 4.15,~\cite{jkls}. The second part is a direct consequence of the definition of the formal Borel transform (Definition~\ref{def259}) and the moment differentiation (Definition~\ref{def260}).
\end{proof}


The first statement of the next result extends Proposition 6~\cite{michalik17fe} to the framework of strongly regular sequences. Its proof rests heavily on that of Proposition 3,~\cite{M}. The second statement will be crucial in the sequel at the time of giving a coherent meaning to the moment derivatives of the sum of a formal power series.

\begin{theo}\label{lemma1}
Let $m_e=(m_e(p))_{p\ge0}$ be a sequence of moments, and let $\tilde{\mathbb{M}}$ be a strongly regular sequence. We also fix $d,\,\theta,\,r\in\R$, with $\theta,\,r>0$, and $\varphi\in\mathcal{O}^{\tilde{\mathbb{M}}}(\hat{S}_{d}(\theta;r),\mathbb{E})$. Then there exists $0<\tilde{r}<r$ such that for all $0<\theta_1<\theta$, all $z\in \hat{S}_d(\theta_1;\tilde{r})$ and $n\in\N_0$, the following statements hold:
\begin{enumerate}
\item
\begin{equation}\label{e244}
\partial_{m_e,z}^{n}\varphi(z)=\frac{1}{2\pi i}\oint_{\Gamma_z}\varphi(w)\int_0^{\infty(\tau)}\xi^n E(z\xi)\frac{e(w\xi)}{w\xi}d\xi dw,
\end{equation}
with $\tau=\tau(\omega)\in (-\arg(w)-\frac{\omega(m_e)\pi}{2},-\arg(w)+\frac{\omega(m_e)\pi}{2})$, which depends on $w$. The integration path $\Gamma_z$ is a deformation of the circle $\{|w|=r_1\}$, for any choice of  $0<r_1<r$, which depends on $z$.
\item There exist constants $C_1,C_2,C_3>0$ such that
\begin{equation}\label{e279}
\left\|\partial_{m_e,z}^{n}\varphi(z)\right\|_{\mathbb{E}}\le C_1 C_2^n m_e(n)\exp\left(\tilde{M}(C_3|z|)\right),
\end{equation}
for all $n\in\N_0$ and $z\in \hat{S}_d(\theta_1;\tilde{r})$.
\end{enumerate}
\end{theo}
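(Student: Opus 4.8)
The plan is to establish the integral representation first, and then extract the estimate from it by carefully deforming the integration path $\Gamma_z$ depending on the location of $z$.

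\textbf{Step 1: The integral representation.} I would start from the observation that, since $\varphi\in\mathcal{O}^{\tilde{\mathbb{M}}}(\hat{S}_d(\theta;r),\mathbb{E})$ is holomorphic near the origin, its Taylor coefficients $\varphi_p$ can be recovered by the Cauchy formula $\varphi_p=\frac{1}{2\pi i}\oint_{|w|=r_1}\varphi(w)w^{-p-1}\,dw$ for $0<r_1<r$. Then $\partial_{m_e,z}^n\varphi(z)=\sum_{p\ge0}\frac{m_e(p+n)}{m_e(p)}\varphi_{p+n}\,z^p$ by Definition~\ref{def260}. The idea is to write $\frac{m_e(p+n)}{m_e(p)}$ back in an integral form using the moment function: recall $m_e(p+n)=\int_0^{\infty(\tau)}\xi^{p+n}e(\xi)\frac{d\xi}{\xi}$ (the moment integral along a ray), while $\frac{1}{m_e(p)}$ is exactly the $p$-th Taylor coefficient of $E$, so $\sum_{p\ge0}\frac{(z\xi)^p}{m_e(p)}=E(z\xi)$. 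Substituting $\xi\mapsto w\xi$ in the moment integral so that $\xi^{p+n}$ pairs with $w^{-p-1}$ from the Cauchy formula, and interchanging the sum over $p$ with the two integrals, one obtains precisely~(\ref{e244}). The path $\Gamma_z$ must be chosen (as in Proposition 3 of~\cite{M}) so that, for $w\in\Gamma_z$, the direction $\tau$ can be selected with $\arg(w\xi)$ inside $S_0(\omega(m_e)\pi)$ (so that $e(w\xi)$ decays) while simultaneously $\arg(z\xi)$ stays in a half-plane where the growth of $E(z\xi)$ is controlled well enough to make the $\xi$-integral converge absolutely; the deformation from the circle $\{|w|=r_1\}$ to $\Gamma_z$ is justified by Cauchy's theorem since $\varphi$ is holomorphic on the closed region swept out, provided $|z|$ is kept below some $\tilde r$. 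The main care here is the Fubini/absolute-convergence justification and the compatibility of the constraints on $\tau$ coming from $e$ and $E$ — this uses $\omega(m_e)=\omega(\tilde{\mathbb{M}})<2$ (through the remark that $m_e$ and its generating strongly regular sequence are equivalent) so that there is enough angular room.

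\textbf{Step 2: The estimate.} With~(\ref{e244}) in hand, I would bound each factor. On $\Gamma_z$, write $|\varphi(w)|\le \tilde c\exp(\tilde M(|w|/\tilde k))$ from~(\ref{e144}); on the inner integral use $|E(z\xi)|\le \tilde c\exp(\tilde M(|z\xi|/\tilde k))$ from~(\ref{e191}) and the decay $|e(w\xi)/(w\xi)|\le c\exp(-\tilde M(|w\xi|/k))/|w\xi|$ from~(\ref{e162}) (here I use that the moment function/kernel $e$ associated to $\tilde{\mathbb M}$ gives $\tilde M$ up to the equivalence constants of Lemma~\ref{lemma2}). The product of the exponentials in the $\xi$-integral is $\exp\big(\tilde M(|z\xi|/\tilde k)-\tilde M(|w\xi|/k)\big)$; by choosing $\Gamma_z$ so that $|w|$ is comparable to $|z|$ (scaling the radius $r_1$ with $|z|$, which is the crucial point of the deformation), one arranges $|w\xi|/k\ge |z\xi|/(\text{something})$ and then Lemma~\ref{lemaaux1} lets one absorb a factor $\xi^n$: indeed $\xi^n\le m_e(n)\exp(\tilde M_e(\xi))$ by the very definition~(\ref{e123}) of the associated function, and then the surplus exponential $\exp(\tilde M_e(\xi))=\exp(\tilde M(\xi))$ up to constants is dominated using Lemma~\ref{lemaaux1} (replacing $\tilde M$ by $s\tilde M(\cdot/\rho(s))$ with $s$ large) against the decaying $\exp(-\tilde M(|w\xi|/k))$, leaving an integrable function of $\xi$ times $\exp(\tilde M(C_3|z|))$. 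The factor $C_2^n$ appears from the rescaling constants, and $m_e(n)$ is exactly the factor pulled out when dominating $\xi^n$. The length of $\Gamma_z$ and the $\exp(\tilde M(|w|/\tilde k))$ from $\varphi$ contribute only to $C_1$ and to adjusting $C_3$, again via Lemma~\ref{lemaaux1} and the moderate growth of $\tilde{\mathbb M}$.

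\textbf{Main obstacle.} The delicate part is Step 2: choosing the deformed path $\Gamma_z$ so that simultaneously (i) it stays inside $\hat S_d(\theta;r)$ where $\varphi$ is defined and holomorphic, (ii) it has radius $\asymp|z|$ so the competing exponentials $\tilde M(|z\xi|/\tilde k)$ and $-\tilde M(|w\xi|/k)$ can be balanced, and (iii) the admissible directions $\tau=\tau(w)$ for the inner ray integral vary continuously with $w$ and keep $e(w\xi)$ in its decay sector. Making the constant $C_3$ uniform in $n$ and in $z\in\hat S_d(\theta_1;\tilde r)$, and in particular verifying that one can take a single $\tilde r$ working for all $0<\theta_1<\theta$, requires the standard but somewhat technical covering argument over arguments of $\hat S_d(\theta;r)$ together with repeated application of Lemma~\ref{lemaaux1} to trade $\exp(-\tilde M)$ decay for powers and for enlarged arguments. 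Once the path is correctly set up, the remaining computations are routine bookkeeping of constants.
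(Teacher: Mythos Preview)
Your Step~1 matches the paper's argument essentially line for line, and the identity $\partial_{m_e,z}^n E(z\xi)=\xi^n E(z\xi)$ is exactly what the paper uses to pass from $\varphi$ to $\partial_{m_e,z}^n\varphi$.

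There is, however, a genuine gap in Step~2. You propose to bound $E$ throughout by~(\ref{e191}) and to balance $\exp\!\big(M(|z\xi|/\tilde k)-M(|w\xi|/k)\big)$ by ``choosing $\Gamma_z$ so that $|w|$ is comparable to $|z|$''. But the contour $\Gamma_z$ must be a \emph{closed} deformation of a circle lying in $\hat S_d(\theta;r)=S_d(\theta)\cup D(0,r)$; for arguments of $w$ outside the sector $S_d(\theta)$ the contour is forced to stay inside $D(0,r)$, hence $|w|<r$ is bounded independently of $|z|$. On that portion of $\Gamma_z$ your balancing fails completely once $|z|$ is large, because the positive term $M(|z\xi|/\tilde k)$ dominates. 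The paper resolves this by splitting $\Gamma_z$ into four pieces: a small arc $\Gamma_1$ at fixed radius $r_1$ (arguments away from $d$), two radial segments $\Gamma_2(R),\Gamma_4(R)$, and a large arc $\Gamma_3(R)$ at radius $R\asymp|z|$. Only on $\Gamma_3(R)$ does the paper use~(\ref{e191}) and the exponential balancing you describe. On $\Gamma_1$, $\Gamma_2(R)$, $\Gamma_4(R)$ the key input you are missing is~(\ref{e202}): for $w$ in those pieces one can choose $\tau$ in the intersection
\[
\Big(-\arg(w)-\tfrac{\omega(m_e)\pi}{2},-\arg(w)+\tfrac{\omega(m_e)\pi}{2}\Big)\ \cap\ \Big(-\arg(z)+\tfrac{\omega(m_e)\pi}{2},-\arg(z)+2\pi-\tfrac{\omega(m_e)\pi}{2}\Big),
\]
so that $z\xi$ lies in the sector $S_\pi(\tilde\theta)$ where $E$ enjoys the \emph{polynomial decay} $|E(z\xi)|\le \tilde c_2/|z\xi|^{\beta}$. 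This decay, not~(\ref{e191}), is what controls the inner integral on the small-radius part of the contour; the paper then splits the $\xi$-integral at $M_E/|z|$ and handles the two pieces separately. Without~(\ref{e202}) the argument does not close.

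A secondary point: you identify the function $M$ appearing in~(\ref{e162}) and~(\ref{e191}) with $\tilde M$, and you write $\omega(m_e)=\omega(\tilde{\mathbb M})$. This is not correct here: $e,E$ are kernels for the strongly regular sequence underlying $m_e$, which is unrelated to $\tilde{\mathbb M}$; the inner $\xi$-integral estimates live entirely in the ``$m_e$ world'' and produce the factor $m_e(n)$, while $\tilde M$ enters only through the growth of $\varphi$ along the outer $w$-integral. Keeping the two sequences separate is important for the final bound~(\ref{e279}) to come out with $m_e(n)$ and $\tilde M$ in the right places.
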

\begin{proof}
We first give a proof for the first statement. From Taylor expansion of $\varphi$ at the origin and the definition of $m_e$-moment derivatives one has
$$\varphi(z)=m_{e}(0)\sum_{p\ge0}\frac{\partial_{m_e,z}^{p}\varphi(0)}{m_e(p)}z^p,$$
for all $z\in D(0,r)$. The application of the Cauchy integral formula for the derivatives yields
$$\partial_{m_e,z}^{p}\varphi(0)=\frac{m_e(p)}{p!m_e(0)}\varphi^{(p)}(0)=\frac{m_e(p)}{2\pi i m_e(0)}\oint_{|w|=r_1}\frac{\varphi(w)}{w^{p+1}}dw,$$
for any $0<r_1<r$.
Let $w\in\C$ with $|w|=r_1$. Following (\ref{e166}) and the change of variables $x=\xi w$ we derive
$$\frac{m_e(p)}{w^{p+1}}=\int_0^{\infty}x^{p-1}\frac{e(x)}{w^{p+1}}dx=\int_0^{\infty(\tau)}\xi^{p}\frac{e(\xi w)}{\xi w}d\xi,$$
where $\tau=-\arg(w)$. We observe from (\ref{e162}) that the previous equality can be extended to any direction of integration $\tau\in\left(-\arg(w)-\frac{\omega(m_e)\pi}{2},-\arg(w)+\frac{\omega(m_e)\pi}{2}\right)$. Therefore, regarding the definition of the kernel function $E(z)$ in (\ref{e167}), one has
\begin{multline}\varphi(z)=m_e(0)\sum_{p\ge0}\frac{\partial_{m_e,z}^{p}\varphi(0)}{m_e(p)}z^{p}=\frac{1}{2\pi i}\oint_{|w|=r_1}\varphi(w)\int_0^{\infty(\tau)}\frac{e(\xi w)}{\xi w}\sum_{p\ge0}\frac{\xi^pz^{p}}{m_e(p)}d\xi dw \\
=\frac{1}{2\pi i}\oint_{|w|=r_1}\varphi(w)\int_0^{\infty(\tau)}E(\xi z)\frac{e(\xi w)}{\xi w}d\xi dw.\label{e309}
\end{multline}
We conclude the first part of the proof, at least from the formal point of view, by observing that
$$\partial_{m_e,z}^{n}E(\xi z)=\partial_{m_e,z}^{n}\left(\frac{(\xi z)^{p}}{m_e(p)}\right)=\sum_{p\ge0}\frac{\xi^{p+n}z^{p}}{m_e(p)}=\xi^{n}E(\xi z),$$
for every $\xi,z\in\C$. It only rests to guarantee that the formal interchange of sum and integrals in (\ref{e309}) can also be made with analytic meaning. We give details about this issue in the second part of the proof.

We proceed to give proof for the second statement of the result. Let us first consider the integral
$$\int_0^{\infty(\tau)}\xi^nE(z\xi)\frac{e(w\xi)}{w\xi}d\xi,$$
for $z$ belonging to some neighborhood of the origin, $w\in\C$ with $|w|=r_1$ as above. We choose $\tau\in\left(-\arg(w)-\frac{\omega(m_e)\pi}{2},-\arg(w)+\frac{\omega(m_e)\pi}{2}\right)$. We first prove that 
\begin{equation}\label{e327}
\left|\int_0^{\infty(\tau)}\xi^nE(z\xi)\frac{e(w\xi)}{w\xi}d\xi\right|\le A_0 B_0^n m_e(n),
\end{equation}
for some $A_0,\,B_0>0$ and all $n\ge0$. This can be done following analogous arguments as those in the proof of Lemma 7.2,~\cite{sanzproceedings}. Let us consider the parametrization $[0,\infty)\ni s\mapsto se^{i\tau}$. In view of (\ref{e162}) and (\ref{e191}), we have
\begin{multline}
\left|\int_0^{\infty(\tau)}\xi^nE(z\xi)\frac{e(w\xi)}{w\xi}d\xi\right|\le \int_0^{\infty}s^{n}|E(se^{i\tau}z)|\frac{|e(se^{i\tau}w)|}{sr_1}ds\\
\le \frac{c\tilde{c}}{r_1}\int_0^{\infty}s^{n-1}\exp\left(M\left(\frac{s|z|}{\tilde{k}}\right)\right)\exp\left(-M\left(\frac{sr_1}{k}\right)\right)ds,
\end{multline}
for some $c,\,\tilde{c},\,k,\,\tilde{k}>0$. We apply Lemma~\ref{lemaaux1} to arrive at 
\begin{equation}\label{e329}
\exp\left(M\left(\frac{s|z|}{\tilde{k}}\right)\right)\exp\left(-M\left(\frac{sr_1}{k}\right)\right)\le \exp\left(M\left(\frac{s|z|}{\tilde{k}}\right)-2M\left(\frac{sr_1}{\rho(2)k}\right)\right).
\end{equation}
We recall that the function $M$ is a monotone increasing function. Therefore, if $|z|\le \tilde{r}:=\frac{r_1 \tilde{k}}{k\rho(2)}$, then (\ref{e329}) is bounded from above by $\exp(-M(sr_1/(\rho(2)k)))$. Let us write
\begin{equation}\label{e352}
\int_{0}^{\infty}s^{n-1}e^{-M\left(\frac{sr_1}{\rho(2)k}\right)}ds= \int_0^1s^{n-1}e^{-M\left(\frac{sr_1}{\rho(2)k}\right)}ds+\int_1^\infty s^{n-1}e^{-M\left(\frac{sr_1}{\rho(2)k}\right)}ds=I_1+I_2.
\end{equation}
The definition of $M$ guarantees upper bounds for $|I_1|$ which do not depend on $n$. We proceed to study $I_2$.
Bearing in mind the definition of $M$, we arrive at
$$\int_1^\infty s^{n-1}e^{-M\left(\frac{sr_1}{\rho(2)k}\right)}ds\le \left(\frac{\rho(2)k}{r_1}\right)^{n+2}m_e(n+2)\int_1^\infty \frac{1}{s^{3}}ds.$$
The application of $(mg)$ condition on $m_e(n+2)\le A_1^{n+2}m_{e}(2)m_e(n)$ allows to conclude (\ref{e327}) for $z\in D(0,\tilde{r})$. The estimate (\ref{e279}) is attained by applying (\ref{e327}) to (\ref{e244}). More precisely, we have 
\begin{equation}\label{e358}
\left\|\partial_{m_e,z}^{n}\varphi(z)\right\|_{\mathbb{E}}\le \left(\sup_{|w|=r_1}\left\|\varphi(w)\right\|\right)A_0B_0^nm_{e}(n),
\end{equation}
which entails (\ref{e279}) for $z\in D(0,\tilde{r})$.

Let $0<\theta_1<\theta$, and $z\in S_d(\theta_1)$ with $|z|\ge \tilde{r}$. We study (\ref{e279}) in such a domain. We deform the integration path $\{|w|=r_1\}$ as follows. Let $\theta_1<\theta_2<\theta$ and let $R=R(z)=\frac{\rho(2)k}{\tilde{k}}|z|$. We write $\Gamma_z=\Gamma(R)=\Gamma_1+\Gamma_2(R)+\Gamma_3(R)+\Gamma_4(R)$, where $\Gamma_1$ is the arc of the circle joining the points $r_1e^{i(d+\theta_2)}$ and $r_1e^{i(d-\theta_2)}$ counterclockwise, $\Gamma_2(R)$ is the segment $[r_1,R]e^{i(d-\theta_2)}$, $\Gamma_3(R)$ is the arc of the circle joining  the points $Re^{i(d-\theta_2/2)}$ and $Re^{i(d+\theta_2/2)}$ counterclockwise and $\Gamma_4(R)$ is the segment $[r_1,R]e^{i(d+\theta_2/2)}$. Figure~\ref{fig1} illustrates this deformation path.

\begin{figure}
	\centering
		\includegraphics[width=0.5\textwidth]{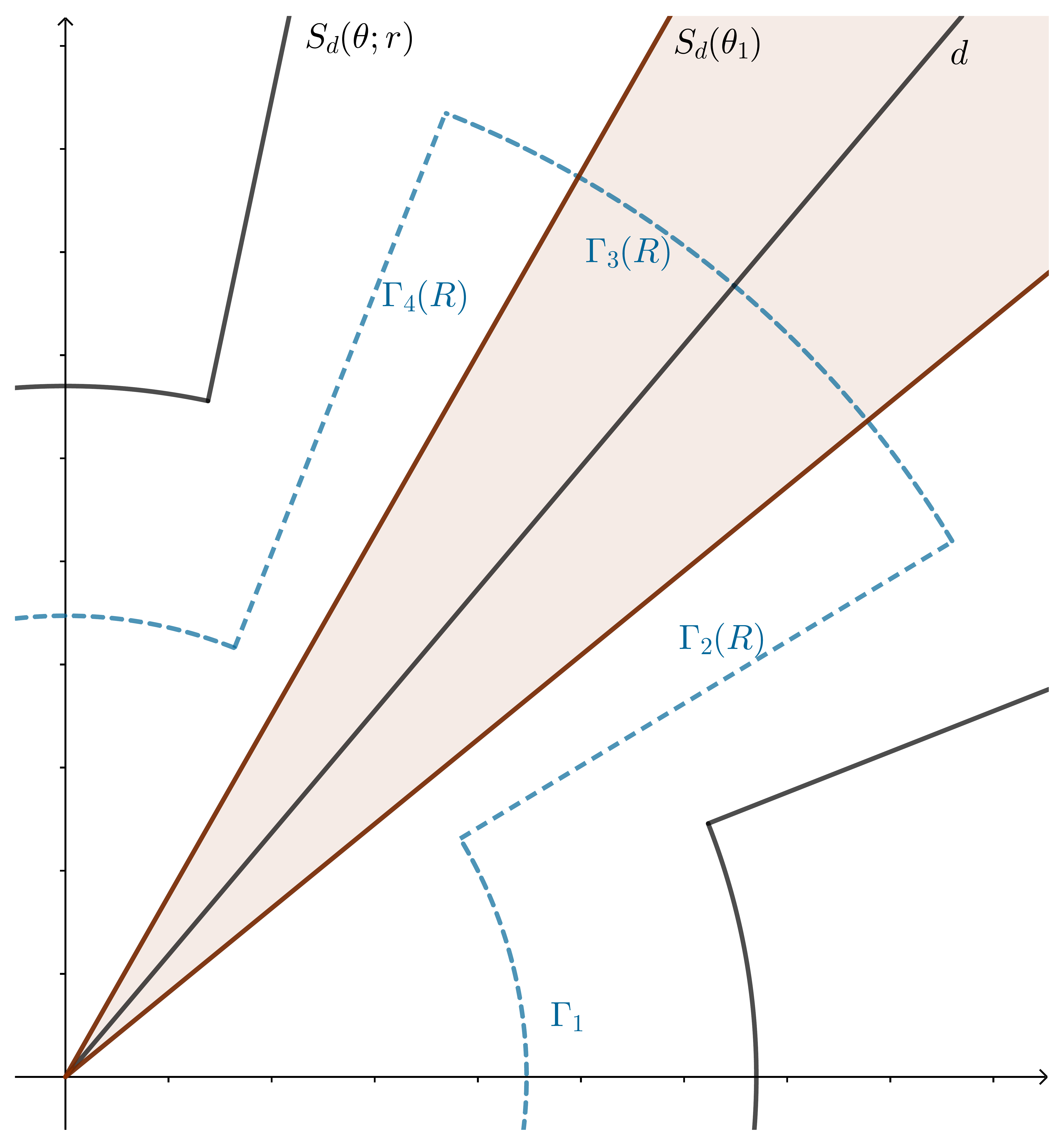}
		\caption{Deformation path}
		\label{fig1}
\end{figure}

We first study the case $\omega\in\Gamma_1$, i.e., $|w|=r_1$. Observe that for every $w\in\Gamma_1$ it is always possible to choose $\tau$ such that
\begin{equation}\label{e379}
\tau\in\left(-\hbox{arg}(w)-\frac{\omega(m_e)\pi}{2},-\hbox{arg}(w)+\frac{\omega(m_e)\pi}{2}\right)\cap \left(-\hbox{arg}(z)+\frac{\omega(m_e)\pi}{2},-\hbox{arg}(z)+2\pi-\frac{\omega(m_e)\pi}{2}\right).
\end{equation}
For one of such directions $\tau$ we have
$$\int_0^{\infty(\tau)}\xi^n E(z\xi)\frac{e(w\xi)}{w\xi}d\xi =\int_0^{\infty}(se^{i\tau})^n E(zse^{i\tau})\frac{e(wse^{i\tau})}{wse^{i\tau}}e^{i\tau}ds.$$
We split the previous integral into two parts. Let $M_E>0$. On the one hand, one has
\begin{multline}
\left|\int_{0}^{M_E/|z|}(se^{i\tau})^n E(zse^{i\tau})\frac{e(wse^{i\tau})}{wse^{i\tau}}e^{i\tau}ds\right|\le \int_{0}^{M_E/|z|}s^n |E(zse^{i\tau})|\frac{|e(wse^{i\tau})|}{r_1s}ds\\
\le \left(\max_{y\in\overline{D}(0,M_E)}|E(y)|\right)\frac{1}{r_1}\int_{0}^{M_E/|z|}s^{n-1} |e(wse^{i\tau})|ds.
\end{multline}
Bearing in mind (\ref{e162}), we have
$$\int_{0}^{M_E/|z|}s^{n-1} |e(wse^{i\tau})|ds\le c\int_{0}^{M_E/|z|}s^{n-1} \exp\left(-M\left(\frac{r_1 s}{k}\right)\right)ds.$$
Analogous estimates as in (\ref{e352}) allow us to arrive at 
\begin{equation}\label{e383}
\left|\int_{0}^{M_E/|z|}(se^{i\tau})^n E(zse^{i\tau})\frac{e(wse^{i\tau})}{wse^{i\tau}}e^{i\tau}ds\right|\le C_{1.1}C_{2.1}^{n}m_e(n)\left(\max_{y\in\overline{D}(0,M_E)}|E(y)|\right)\frac{1}{r_1^{n+3}},
\end{equation}
for some $C_{1.1},\,C_{2.1}>0$. Analogously, we estimate the second integral by means of the upper bounds in (\ref{e202}). Indeed, one has
\begin{align*}
\left|\int_{M_E/|z|}^{\infty}(se^{i\tau})^n E(zse^{i\tau})\frac{e(wse^{i\tau})}{wse^{i\tau}}e^{i\tau}ds\right|&\le \int_{M_E/|z|}^{\infty}s^n \frac{\tilde{c_2}}{(|z|s)^{\beta}}\frac{|e(wse^{i\tau})|}{r_1s}ds\\
&\le\frac{\tilde{c_2}}{r_1(M_E)^{\beta}}\int_{M_E/|z|}^{\infty}s^{n-1} |e(wse^{i\tau})|ds,
\end{align*}
for some $\tilde{c}_2,\,\beta>0$. The estimates in (\ref{e352}) can be applied again in order to arrive at 
\begin{equation}\label{e384}
\left|\int_{M_E/|z|}^{\infty}(se^{i\tau})^n E(zse^{i\tau})\frac{e(wse^{i\tau})}{wse^{i\tau}}e^{i\tau}ds\right|\le C_{1.2}C_{2.2}^{n}m_e(n)\frac{1}{r_1^{n+3}},
\end{equation}
for some $C_{1.2},\,C_{2.2}>0$. From (\ref{e383}) and (\ref{e384}) one can conclude in the spirit of (\ref{e358}) that
\begin{equation}\label{sol1}
\left\|\frac{1}{2\pi i}\int_{\Gamma_1}\varphi(w)\int_0^{\infty(\tau)}\xi^n E(z\xi)\frac{e(w\xi)}{w\xi}d\xi dw\right\|_{\mathbb{E}}\le D_1 D_2^{n}m_e(n)\exp\left(\tilde{M}(D_3|z|)\right),
\end{equation}
for some $D_1,\,D_2,\,D_3>0$.
We continue with the case $w\in\Gamma_2(R)$. The same choice for $\tau$ in (\ref{e379}) holds. We parametrize $\Gamma_2(R)$ by $[r_1,R]\ni\rho\mapsto \rho e^{i(d-\theta_2/2)}$ to arrive at
$$\left|\int_0^{\infty(\tau)}\xi^n E(z\xi)\frac{e(w\xi)}{w\xi}d\xi\right|\le 
\frac{1}{\rho}\int_0^{\infty}s^{n-1}\left|E(zse^{i\tau})\right|\left|e(\rho se^{i\left(\tau+d-\theta_2/2\right)})\right|ds.$$
The same splitting of the path into the segment $[0,M_E/|z|]$ and the ray $[M_E/|z|,\infty)$, and analogous arguments as in the first part of the proof yield
$$\left|\int_{0}^{\infty(\tau)}\xi^n E(z\xi)\frac{e(w\xi)}{w\xi}d\xi\right|\le C_{1.3}C_{2.3}^{n}m_e(n)\frac{1}{\rho^{n+3}},$$
for some $C_{1.3},\,C_{2.3}>0$, and $w=\rho e^{i(d-\theta_2/2)}$ for some $r_1\le \rho\le R$. We derive that 
\begin{multline}
\left\|\int_{\Gamma_2(R)}\varphi(w)\int_0^{\infty(\tau)}\xi^nE(z\xi)\frac{e(w \xi)}{w\xi}d\xi dw\right\|_{\mathbb{E}}\le C_{1.3}C_{2.3}^{n}m_e(n)\frac{1}{2\pi}\int_{r_1}^{R}\left\|\varphi(\rho e^{i(d-\theta_2/2)})\right\|_{\mathbb{E}}\frac{1}{\rho^{n+3}}d\rho\\
\le \frac{c_{\varphi}C_{1.3}}{2\pi r_1^3}\left(\frac{C_{2.3}}{r_1}\right)^{n}m_e(n) R\exp\left(\tilde{M}\left(\frac{R}{\tilde{k}_{\varphi}}\right)\right)
\end{multline} 
for some $c_{\varphi},\,\tilde{k}_{\varphi}>0$ associated with the growth of $\varphi$ near infinity. A direct consequence of the definition of the function $\tilde{M}$, and the definition of the radius $R=R(|z|)$ yield
$$R\exp\left(\tilde{M}\left(\frac{R}{\tilde{k}_{\varphi}}\right)\right)\le \exp\left(\tilde{M}\left(\frac{cR}{\tilde{k}_{\varphi}}\right)\right)=\exp\left(\tilde{M}\left(\frac{c\rho(2) k|z|}{\tilde{k}\tilde{k}_{\varphi}}\right)\right),$$
which allows to end this part of the proof. We get that
\begin{equation}\label{sol2}
\left\|\frac{1}{2\pi i}\int_{\Gamma_2(R)}\varphi(w)\int_0^{\infty(\tau)}\xi^n E(z\xi)\frac{e(w\xi)}{w\xi}d\xi dw\right\|_{\mathbb{E}}\le D_4 D_5^{n}m_e(n)\exp\left(\tilde{M}(D_6|z|)\right),
\end{equation} 
for some $D_4,\,D_5,\,D_6>0$.

The case $w\in\Gamma_4(R)$ can be treated analogously, to arrive at
\begin{equation}\label{sol3}
\left\|\frac{1}{2\pi i}\int_{\Gamma_4(R)}\varphi(w)\int_0^{\infty(\tau)}\xi^n E(z\xi)\frac{e(w\xi)}{w\xi}d\xi dw\right\|_{\mathbb{E}}\le D_7 D_8^{n}m_e(n)\exp\left(\tilde{M}(D_9|z|)\right),
\end{equation} 
for some $D_7,\,D_8,\,D_9>0$.

We conclude the proof with the case that $w\in\Gamma_3(R)$. We parametrize $\Gamma_3(R)$ by $[d-\theta_2/2,d+\theta_2/2]\ni t\mapsto Re^{it}$ and choose $w\in\Gamma_3(R)$. Let $\tau\in \left(-\arg(w)-\frac{\omega(m_e)\pi}{2},-\arg(w)+\frac{\omega(m_e)\pi}{2}\right)$. Then, one has
\begin{align*}
\left|\int_0^{\infty(\tau)}\xi^n E(z\xi)\frac{e(w\xi)}{w\xi}d\xi\right|&=\left|\int_0^{\infty}(se^{i\tau})^n E(zse^{i\tau})\frac{e(wse^{i\tau})}{wse^{i\tau}}e^{i\tau}ds\right|\\
&\le \frac{1}{R}\int_0^{\infty}s^{n-1} |E(zse^{i\tau})||e(wse^{i\tau})|ds.
\end{align*}
In view of (\ref{e191}) and (\ref{e162}), together with the application of the same argument as in (\ref{e329}) (for $r_1$ substituted by $R$),  the previous expression is bounded from above by
\begin{multline}\frac{c\tilde{c}}{R}\int_0^{\infty}s^{n-1}\exp\left(M\left(\frac{|z|s}{\tilde{k}}\right)\right)\exp\left(-M\left(\frac{Rs}{k}\right)\right) ds\\
\le \frac{c\tilde{c}}{R}\int_0^{\infty}s^{n-1}\exp\left(M\left(\frac{s|z|}{\tilde{k}}\right)-2M\left(\frac{sR}{\rho(2)k}\right)\right)ds.
\end{multline}
The function $M$ is monotone increasing in $[0,\infty)$. We recall that $R=\frac{\rho(2)k}{\tilde{k}}|z|$, so the previous expression is bounded from above by
$$\frac{c\tilde{c}}{R}\int_0^{\infty}s^{n-1}\exp\left(-M\left(\frac{sR}{\rho(2)k}\right)\right)ds.$$
At this point, one can take into account (\ref{e352}), together with $R=\frac{\rho(2)k}{\tilde{k}}|z|\ge \frac{\rho(2)k}{\tilde{k}}\tilde{r}$, and
$$\sup_{|w|=R}\left\|\varphi(w)\right\|_{\mathbb{E}}\le c_{\varphi}\exp\left(\tilde{M}\left(\frac{R}{\tilde{k}_{\varphi}}\right)\right)=c_{\varphi}\exp\left(\tilde{M}\left(\frac{\rho(2)k}{\tilde{k}\tilde{k}_{\varphi}}|z|\right)\right)$$
to get that
\begin{equation}\label{sol4}
\left\|\frac{1}{2\pi i}\int_{\Gamma_3(R)}\varphi(w)\int_0^{\infty(\tau)}\xi^n E(z\xi)\frac{e(w\xi)}{w\xi}d\xi dw\right\|_{\mathbb{E}}\le D_{10} D_{11}^{n}m_e(n)\exp\left(\tilde{M}(D_{12}|z|)\right),
\end{equation} 
for some $D_{10},\,D_{11},\,D_{12}>0$.

Statement (\ref{e279}) follows from the application of (\ref{sol1}), (\ref{sol2}), (\ref{sol3}) and (\ref{sol4}). We observe that the identity in (\ref{e244}) is of analytic nature after the deformation path with respect to $w$ and the appropriate choice of $\tau$ described in the proof, for each $z\in \hat{S}_d(\theta_1;\tilde{r})$.
\end{proof}

\begin{corol}\label{coro1}
Let $m_e$ be a sequence of moments, and let $\tilde{\mathbb{M}}$ be a strongly regular sequence admitting a nonzero proximate order. Given $d\in\R$, the space $\mathbb{E}\{z\}_{\tilde{\mathbb{M}},d}$ is closed under $m_e$-differentiation.
\end{corol}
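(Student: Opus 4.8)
The plan is to transport everything to the Borel side and reduce to Theorem~\ref{lemma1}. Since $\tilde{\mathbb{M}}$ admits a nonzero proximate order, fix a kernel function $\tilde{e}$ for $\tilde{\mathbb{M}}$-summability and let $m_{\tilde{e}}=(m_{\tilde{e}}(p))_{p\ge0}$ be its associated sequence of moments, which is equivalent to $\tilde{\mathbb{M}}$; by the remark after Definition~\ref{defi271} the space $\mathbb{E}\{z\}_{\tilde{\mathbb{M}},d}$ is described through $\hat{\mathcal{B}}_{m_{\tilde{e}},z}$, independently of this choice. Thus, for $\hat{u}\in\mathbb{E}\{z\}_{\tilde{\mathbb{M}},d}$, the series $\varphi:=\hat{\mathcal{B}}_{m_{\tilde{e}},z}(\hat{u})$ converges near the origin and its holomorphic extension, still denoted $\varphi$, belongs to $\mathcal{O}^{\tilde{\mathbb{M}}}(\hat{S}_d(\theta;r),\mathbb{E})$ for suitable $\theta,r>0$.

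The key point is that $m_e$-differentiation does not commute with $\hat{\mathcal{B}}_{m_{\tilde{e}},z}$, but, by Lemma~\ref{lema268}, $\hat{\mathcal{B}}_{m_{\tilde{e}},z}\circ\partial_{m_e,z}=\partial_{m_{\tilde{e}}m_e,z}\circ\hat{\mathcal{B}}_{m_{\tilde{e}},z}$ on $\mathbb{E}[[z]]$, where the product $m_{\tilde{e}}m_e$ is again a sequence of moments (same lemma). Hence
$$\hat{\mathcal{B}}_{m_{\tilde{e}},z}\bigl(\partial_{m_e,z}\hat{u}\bigr)=\partial_{m_{\tilde{e}}m_e,z}\varphi,$$
which is a holomorphic germ at the origin (Definition~\ref{def260} extended to $\mathcal{O}$ near $0$, using that moment sequences preserve positivity of the radius of convergence). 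So the whole matter reduces to showing that $\partial_{m_{\tilde{e}}m_e,z}\varphi$ extends holomorphically to an infinite sector bisected by $d$ as an element of $\mathcal{O}^{\tilde{\mathbb{M}}}$.

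This last assertion is precisely Theorem~\ref{lemma1} applied with the sequence of moments $m_{\tilde{e}}m_e$ in the role of $m_e$ there, the strongly regular sequence $\tilde{\mathbb{M}}$, and the function $\varphi\in\mathcal{O}^{\tilde{\mathbb{M}}}(\hat{S}_d(\theta;r),\mathbb{E})$: part $(2)$ with $n=1$ furnishes $0<\tilde{r}<r$ and constants $C_1,C_2,C_3>0$ with $\bigl\|\partial_{m_{\tilde{e}}m_e,z}\varphi(z)\bigr\|_{\mathbb{E}}\le C_1C_2\,(m_{\tilde{e}}m_e)(1)\exp\bigl(\tilde{M}(C_3|z|)\bigr)$ on every $\hat{S}_d(\theta_1;\tilde{r})$ with $\theta_1<\theta$, i.e. $\partial_{m_{\tilde{e}}m_e,z}\varphi\in\mathcal{O}^{\tilde{\mathbb{M}}}(\hat{S}_d(\theta;\tilde{r}),\mathbb{E})$. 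Invoking Definition~\ref{defi271} then gives $\partial_{m_e,z}\hat{u}\in\mathbb{E}\{z\}_{\tilde{\mathbb{M}},d}$; iterating (or using Theorem~\ref{lemma1} for general $n$ together with $(mg)$) yields closure under $\partial_{m_e,z}^n$ as well.

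As for the main obstacle: the genuine analytic content sits entirely inside Theorem~\ref{lemma1}, so at the level of the corollary there is essentially nothing hard. The only things demanding care are the bookkeeping of which moment sequence governs the Borel side after differentiation --- namely the product $m_{\tilde{e}}m_e$ rather than $m_e$ --- and observing that shrinking the radius from $r$ to $\tilde{r}$ is immaterial, since $\tilde{\mathbb{M}}$-summability only asks for an extension to \emph{some} infinite sector together with a neighbourhood of the origin.
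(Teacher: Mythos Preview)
Your proof is correct and follows essentially the same route as the paper's own argument: pass to the Borel side via $\hat{\mathcal{B}}_{m_{\tilde e},z}$, use Lemma~\ref{lema268} to convert $\partial_{m_e,z}$ into $\partial_{m_{\tilde e}m_e,z}$ acting on $\varphi$, and then invoke Theorem~\ref{lemma1} (with $n=1$) to obtain the required $\mathcal{O}^{\tilde{\mathbb{M}}}$ growth. Your remarks on the bookkeeping (the product sequence, the harmless shrinking of the radius) are accurate and match the paper's reasoning.
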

\begin{proof}
Let $\tilde{e},\,\tilde{E}$ be a pair of kernel functions for $\tilde{\mathbb{M}}$-summability, whose existence is guaranteed (see Remark at page~\pageref{r223}). Let $m_{\tilde{e}}$ be its associated sequence of moments. We write $\overline{m}:=(m_e(p)m_{\tilde{e}}(p))_{p\ge0}$, which is a sequence of moments in view of Lemma~\ref{lema268}.
  
Let $\hat{f}\in \mathbb{E}\{z\}_{\tilde{\mathbb{M}},d}$. Then, it holds that $\hat{\mathcal{B}}_{m_{\tilde{e}},z}\hat{f}$ defines a function on some neighborhood of the origin, say $U$, which can be extended to an infinite sector of bisecting direction $d$, say $S_{d}$. Therefore, $\hat{\mathcal{B}}_{m_{\tilde{e}},z}\hat{f}\in\mathcal{O}^{\mathbb{M}}(\hat{S}_d,\mathbb{E})$. We apply the second part of Theorem~\ref{lemma1} to the strongly regular sequence $\overline{m}$ and $n=1$ in order to achieve that $\partial_{\overline{m},z}\hat{\mathcal{B}}_{m_{\tilde{e}},z}\hat{f}$, which is an element in $\mathcal{O}(U)$, is such that $\partial_{\overline{m},z}\hat{\mathcal{B}}_{m_{\tilde{e}},z}\hat{f}\in \mathcal{O}^{\mathbb{M}}(\hat{S}_d,\mathbb{E})$. Lemma~\ref{lema268} yields that 
$$\partial_{\overline{m},z}\hat{\mathcal{B}}_{m_{\tilde{e}},z}\hat{f}\equiv \hat{\mathcal{B}}_{m_{\tilde{e}},z}\partial_{m_{e},z}\hat{f}.$$
We conclude that  $\partial_{m_{e},z}\hat{f}$ defines a holomorphic function on some neighborhood of the origin, and admits an analytic extension to an infinite sector of bisecting direction $d$, with adequate growth at infinity. This entails that $\partial_{m_{e},z}\hat{f}\in\mathbb{E}\{z\}_{\tilde{\mathbb{M}},d}$.
\end{proof}

As a consequence of Corollary~\ref{coro1} and the alternative definition of summable formal power series stated in Theorem~\ref{teo1} the following definition makes sense.

\begin{defin}\label{def487}
Let $\tilde{\mathbb{M}}$ be a strongly regular sequence admitting a nonzero proximate order. Assume that $\hat{u}\in\mathbb{E}\{z\}_{\tilde{\mathbb{M}},d}$, for some $d\in\R$. Let $m_e$ be a sequence of moments. Then, we define the operator of $m_e$-moment differentiation of $\mathcal{S}_{\tilde{M},d}(\hat{u})$ by
$$\partial_{m_e,z}(\mathcal{S}_{\tilde{M},d}(\hat{u})):=\mathcal{S}_{\tilde{\mathbb{M}},d}(\partial_{m_e,z}(\hat{u})).$$
\end{defin}

The previous definition allows to determine upper bounds for the sum of a formal power series in the same way as in Theorem~\ref{lemma1}.

\begin{prop}\label{prop497}
Let $m_{e}=(m_e(p))_{p\ge 0}$ be a sequence of moments. Let $\tilde{\mathbb{M}}$ be a strongly regular sequence admitting a nonzero proximate order and $d\in\R$. We choose $\hat{u}\in\mathbb{E}\{z\}_{\tilde{\mathbb{M}},d}$ and write $u=\mathcal{S}_{\tilde{\mathbb{M}},d}(\hat{u})\in\mathcal{O}(G,\mathbb{E})$ for some sectorial region $G=G_d(\theta)$ with $\theta>\pi\omega(\tilde{\mathbb{M}})$. Then for every $G'\prec G$ there exist $C_4,\,C_5>0$ such that
\begin{equation}\label{e496}
\left\|\partial_{m_e,z}^{n}u(z)\right\|_{\mathbb{E}}\le C_4C_5^nm_e(n)\tilde{M}_n,
\end{equation}
for all $n\in\N_0$ and $z\in G'$.
\end{prop}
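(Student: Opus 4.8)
\emph{Proof strategy.} The plan is to represent $u=\mathcal S_{\tilde{\mathbb M},d}(\hat u)$ as a Laplace transform, transfer the $m_e$-derivatives to the Borel side, and there invoke the second part of Theorem~\ref{lemma1}. Fix a kernel function $\tilde e$ for $\tilde{\mathbb M}$-summability, let $m_{\tilde e}$ be its associated (strongly regular) sequence of moments, and put $\varphi:=\hat{\mathcal B}_{m_{\tilde e},z}\hat u$. Since $\hat u\in\mathbb E\{z\}_{\tilde{\mathbb M},d}$, the series $\varphi$ converges near the origin and extends to an element of $\mathcal O^{\tilde{\mathbb M}}(\hat S_d(\theta_1;r_0),\mathbb E)$ for a suitable opening $\theta_1$ chosen so that $G'\prec G_d(\theta_1+\omega(\tilde{\mathbb M})\pi)$, and $u=T_{\tilde e,d}\varphi$ by Theorem~\ref{teo1}. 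Set $\overline m:=(m_e(p)m_{\tilde e}(p))_{p\ge0}$, which is again a sequence of moments by Lemma~\ref{lema268}. Iterating Definition~\ref{def487} (permitted because $\mathbb E\{z\}_{\tilde{\mathbb M},d}$ is closed under $m_e$-differentiation, Corollary~\ref{coro1}) and then using the intertwining identity $\hat{\mathcal B}_{m_{\tilde e},z}\circ\partial_{m_e,z}=\partial_{\overline m,z}\circ\hat{\mathcal B}_{m_{\tilde e},z}$ from Lemma~\ref{lema268} together with Theorem~\ref{teo1}, one obtains
\[
\partial_{m_e,z}^{n}u=\mathcal S_{\tilde{\mathbb M},d}\bigl(\partial_{m_e,z}^{n}\hat u\bigr)=T_{\tilde e,d}\bigl(\partial_{\overline m,z}^{n}\varphi\bigr),\qquad n\in\N_0,
\]
where on the right $\partial_{\overline m,z}^{n}\varphi$ is the $\overline m$-moment derivative of the \emph{function} $\varphi$ in the sense of Theorem~\ref{lemma1}(1).

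The next step is to feed $\varphi$ into the second statement of Theorem~\ref{lemma1}, now with the moment sequence $\overline m$ in place of $m_e$: there exist $C_1,C_2,C_3>0$ and $0<\tilde r\le r_0$ such that, abbreviating $\psi_n:=\partial_{\overline m,z}^{n}\varphi$,
\[
\|\psi_n(w)\|_{\mathbb E}\le C_1C_2^{n}\,\overline m(n)\,\exp\bigl(\tilde M(C_3|w|)\bigr),\qquad w\in\hat S_d(\theta_1';\tilde r),\ n\in\N_0,
\]
for each $\theta_1'<\theta_1$; it is essential here that $C_3$ is \emph{independent of $n$}. In particular every $\psi_n$ lies in $\mathcal O^{\tilde{\mathbb M}}(\hat S_d(\theta_1';\tilde r),\mathbb E)$, so $T_{\tilde e,d}\psi_n$ is defined on a sectorial region containing $G'$ and coincides there with $\partial_{m_e,z}^{n}u$.

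The core of the proof is then a uniform bound for the Laplace integral. Let $r'>0$ satisfy $G'\subseteq D(0,r')$ and fix a small $\epsilon>0$ as in (\ref{e162}). For $z\in G'$ pick an admissible direction $\tau$ with $|\tau-\arg z|<\omega(\tilde{\mathbb M})\pi/2-\epsilon/2$, and perform the change of variables $u=zv$, with $\tau':=\tau-\arg z$ the resulting integration direction (so $du/u=dv/v$ and $\tilde e(u/z)=\tilde e(v)$), which gives
\[
\bigl\|T_{\tilde e,d}\psi_n(z)\bigr\|_{\mathbb E}\le C_1C_2^{n}\overline m(n)\int_0^{\infty(\tau')}|\tilde e(v)|\,\exp\bigl(\tilde M(C_3|z|\,|v|)\bigr)\,\frac{|dv|}{|v|}.
\]
I would split this integral at $|v|=t_0$, the threshold provided by the local uniform integrability of $\tilde e(v)/v$ at the origin, condition (\ref{e192}). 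On $\{|v|\le t_0\}$ the exponential factor is bounded by $\exp(\tilde M(C_3 t_0 r'))$, while $\int_0^{t_0}|\tilde e(se^{i\tau'})|\,ds/s$ is controlled uniformly in $z$ upon covering the compact set of directions $\tau'$ by finitely many of the neighbourhoods appearing in (\ref{e192}). On $\{|v|\ge t_0\}$ I would use the decay estimate (\ref{e162}) together with Lemma~\ref{lemaaux1}, so that $|\tilde e(v)|\le c\exp(-\tilde M(|v|/k))\le c\exp(-2\tilde M(|v|/(\rho(2)k)))$; choosing $|z|$ below the ($n$-independent) threshold $\delta_0:=1/(\rho(2)kC_3)$ and using the monotonicity of $\tilde M$, the integrand is dominated by $\exp(-\tilde M(|v|/(\rho(2)k)))$, which is integrable over $\{|v|\ge t_0\}$. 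Altogether the integral is $\le K_0$ uniformly in $z\in G'$ and $n\in\N_0$, hence $\|\partial_{m_e,z}^{n}u(z)\|_{\mathbb E}\le K_0C_1C_2^{n}\overline m(n)$; since $m_{\tilde e}$ is equivalent to $\tilde{\mathbb M}$ we have $\overline m(n)=m_e(n)m_{\tilde e}(n)\le B_2^{n}m_e(n)\tilde M_n$, and absorbing the geometric factors yields (\ref{e496}) with $C_4:=K_0C_1$ and $C_5:=C_2B_2$. (The restriction to radii below $\delta_0$ is harmless: the $\tilde{\mathbb M}$-sum $u$ already lives on a sectorial region whose radius is governed by $C_3$, so any $G'\prec G$ may be taken inside it; alternatively the complementary annular part of $G'$ is a compact subset of $G$ on which the same bounds for $\psi_n$, carried along the analytic continuation of $T_{\tilde e,d}\psi_n$, give the estimate.)

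The step I expect to be the main obstacle is exactly this Laplace estimate: one must secure uniformity in $z\in G'$ and in the integration direction $\tau$ \emph{near $v=0$}, where the weight $|dv|/|v|$ is not integrable against the fast decay of $\tilde e$ and one is forced back onto the bare hypothesis (\ref{e192}) over a compact family of rays. The remaining bookkeeping — iterating Definition~\ref{def487}, matching the domain of $T_{\tilde e,d}\psi_n$ with $G'$, and checking that the constant $C_3$ delivered by Theorem~\ref{lemma1} does not depend on $n$ — is routine but has to be handled with care.
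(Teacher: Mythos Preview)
Your proposal is correct and follows essentially the same route as the paper's proof: represent $u$ as $T_{\tilde e,d}(\hat{\mathcal B}_{m_{\tilde e},z}\hat u)$, commute the $m_e$-derivatives to $\partial_{\overline m,z}^{n}$ on the Borel side via Lemma~\ref{lema268} and Definition~\ref{def487}, invoke Theorem~\ref{lemma1} for the sequence $\overline m=m_em_{\tilde e}$ to get the uniform bound $C_1C_2^n\overline m(n)\exp(\tilde M(C_3|\cdot|))$, and then estimate the Laplace integral by splitting near and far from the origin using (\ref{e192}) and (\ref{e162}) respectively, finishing with the equivalence $m_{\tilde e}\sim\tilde{\mathbb M}$. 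The only cosmetic difference is that for the tail you appeal to Lemma~\ref{lemaaux1} (the doubling trick) whereas the paper simply invokes ``the very definition of the function $M$''; both give the same $n$-independent bound, and your explicit discussion of the radius threshold $\delta_0$ and the compactness of $G'$ is, if anything, more careful than the paper's terse ``small enough $|z|$\ldots\ vary $\tau$ following the usual procedure''.
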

\begin{proof}
In view of Theorem~\ref{teo1} one can write $u=\mathcal{S}_{\tilde{\mathbb{M}},d}(\hat{u})=T_{\tilde{e},\tau}(\hat{\mathcal{B}}_{m_{\tilde{e}},z}(\hat{u}))$, for some direction $\tau$ close to $d$, with $\tilde{e}$ being any kernel for $\tilde{\mathbb{M}}$-summability and $m_{\tilde{e}}$ its associated sequence of moments. Taking into account Definition~\ref{def487} and Lemma~\ref{lema268}, one has that 
$$\partial_{m_e,z}^{n}u(z)=T_{\tilde{e},\tau}(\hat{\mathcal{B}}_{m_{\tilde{e}},z}(\partial_{m_e,z}^{n}\hat{u}))(z)=T_{\tilde{e},\tau}(\partial_{m_{\tilde{e}}m_e,z}^{n}(\hat{\mathcal{B}}_{m_{\tilde{e}},z}\hat{u}))(z),$$
for all $n\in\N_0$ and $z\in G_d(\theta)$.

We observe that $\hat{\mathcal{B}}_{m_{\tilde{e}},z}\hat{u}\in\mathcal{O}^{\tilde{\mathbb{M}}}(\hat{S}_d(\delta;r),\mathbb{E})$ for some $\delta>0$ and $r>0$. Therefore, one may apply Theorem~\ref{lemma1} to the sequence of moments $m_em_{\tilde{e}}$ (see Lemma~\ref{lema268}) to arrive at
\begin{equation}\label{e507}
\left\|\partial_{m_em_{\tilde{e}},z}^n(\hat{\mathcal{B}}_{m_{\tilde{e}},z}\hat{u}(z))\right\|_{\mathbb{E}}\le C_1C_2^nm_e(n)m_{\tilde{e}}(n)\exp\left(\tilde{M}(C_3|z|)\right),
\end{equation}
for some $C_1,\,C_2,\,C_3>0$ and $z\in\hat{S}_d(\delta_1;\tilde{r})$ for $0<\delta_1<\delta$, $0<\tilde{r}<r$. Let $f(z):=\partial_{m_em_{\tilde{e}},z}^n(\hat{\mathcal{B}}_{m_{\tilde{e}},z}\hat{u}(z))$. Then, there exists $C_6>0$ such that
$$\left\|\int_0^{\infty(\tau)}\tilde{e}(u/z)f(u)\frac{du}{u}\right\|_{\mathbb{E}}\le\left\|\int_0^{t_0}\tilde{e}(u/z)f(u)\frac{du}{u}\right\|_{\mathbb{E}}+\left\|\int_{t_0}^{\infty(\tau)}\tilde{e}(u/z)f(u)\frac{du}{u}\right\|_{\mathbb{E}}=I_3+I_4\le C_6,$$
for some $\tau\in\hbox{arg}(S_d(\delta_1))$. Usual estimates for $\tilde{e}$-Laplace transform lead to the conclusion: the integrability property of $e$ at the origin (see~(\ref{e192})) leads to upper bounds for $I_3$ and $I_4$ is bounded from above in view of (\ref{e162}) and the very definition of the function $M$. More precisely, this holds for $|\hbox{arg}(z)-\tau|<\omega(\tilde{M})\pi/2$ and small enough $|z|$. One may vary $\tau$ among the arguments $\hbox{arg}(S_d(\delta_1))$ following the usual procedure.

Finally, the bounds in (\ref{e496}) are attained taking into account that $\mathbb{M}$ and $m_{\tilde{e}}$ are equivalent sequences, in view of the remark after Definition~\ref{defi271}.
\end{proof}

\section{Application: Summability of formal solutions of moment Partial Differential Equations}\label{secapp}

This section is devoted to the study of summability properties of the formal solutions of a certain family of moment partial differential equations. 

Let $\mathbb{M}$ be a strongly regular sequence which admits a nonzero proximate order. We assume that $\mathbb{M}_1$ and $\mathbb{M}_2$ are strongly regular sequences which admit nonzero proximate order. Let $e_1$ (resp. $e_2$) be a kernel function for $\mathbb{M}_1$-summability (resp. for $\mathbb{M}_2$-summability), and we write $m_1$ (resp. $m_2$) for its associated sequence of moments. Additionally, we assume that $m_1$ and $m_2$ are $\mathbb{M}$-sequences of orders $s_1>0$ and $s_2>0$, respectively.

Let $1\le k<p$ be integer numbers such that $s_2p>s_1k$. Let $r>0$. We denote $D:=D(0,r)$ and assume that $a(z)\in\mathcal{O}(\overline{D})$ and $a(z)^{-1}\in\mathcal{O}(\overline{D})$. We also fix $\hat{f}\in\C[[t,z]]$ and $\varphi_{j}(z)\in\mathcal{O}(\overline{D})$ for $j=0,\ldots,k-1$.

We consider the following Cauchy problem.
\begin{equation}\label{epral}
\left\{ \begin{array}{lcc}
            \left(\partial_{m_1,t}^{k}-a(z)\partial_{m_2,z}^{p}\right)u(t,z)=\hat{f}(t,z)&\\
             \partial_{m_1,t}^{j}u(0,z)=\varphi_j(z),&\quad j=0,\ldots,k-1.
             \end{array}
\right.
\end{equation}

\begin{lemma}
Under the previous assumptions there exists a unique formal solution $\hat{u}(t,z)\in\C[[t,z]]$ of the Cauchy problem (\ref{epral}). Moreover, in the case that $\hat{f}(t,z)\in\mathcal{O}(\overline{D})[[t]]$ we have $\hat{u}(t,z)\in\mathcal{O}(\overline{D})[[t]]$.
\end{lemma}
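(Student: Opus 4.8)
The plan is to prove existence and uniqueness of the formal solution $\hat{u}(t,z) = \sum_{n\ge 0} u_n(z) t^n$ by substituting this ansatz into the Cauchy problem and deriving an explicit recursion for the coefficients $u_n(z)$. First I would write $\hat{f}(t,z) = \sum_{n\ge 0} f_n(z) t^n$ and observe that $\partial_{m_1,t}^k$ acts on the formal power series in $t$ by shifting indices with moment factors: if $\hat{u}(t,z) = \sum_{n\ge0} \frac{v_n(z)}{m_1(n)} t^n$, then $\partial_{m_1,t}^k \hat{u}(t,z) = \sum_{n\ge0} \frac{v_{n+k}(z)}{m_1(n)} t^n$. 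Matching coefficients of $t^n$ in the main equation $\partial_{m_1,t}^k \hat{u} - a(z) \partial_{m_2,z}^p \hat{u} = \hat{f}$ then expresses $u_{n+k}(z)$ in terms of $u_n(z)$, $\partial_{m_2,z}^p u_n(z)$, and $f_n(z)$; concretely $u_{n+k}(z) = \frac{m_1(n)}{m_1(n+k)}\bigl(a(z)\partial_{m_2,z}^p u_n(z) + f_n(z)\bigr)$ up to the correct normalization. The initial conditions $\partial_{m_1,t}^j u(0,z) = \varphi_j(z)$ for $j = 0,\dots,k-1$ fix $u_0(z),\dots,u_{k-1}(z)$ precisely (again up to moment factors, $u_j(z) = \varphi_j(z)/m_1(j)$ in the appropriate normalization). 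Since $1 \le k < p$ and the recursion advances the index by $k$ each time while determining the first $k$ coefficients from the data, every coefficient is uniquely determined, which gives both existence and uniqueness.

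For the second assertion, I would track the regularity in $z$ through the recursion. The data $\varphi_j(z) \in \mathcal{O}(\overline{D})$ give $u_0,\dots,u_{k-1} \in \mathcal{O}(\overline{D})$. The recursion step involves three operations applied to functions holomorphic on $\overline{D}$: multiplication by $a(z) \in \mathcal{O}(\overline{D})$, addition of $f_n(z) \in \mathcal{O}(\overline{D})$ (using the hypothesis $\hat{f} \in \mathcal{O}(\overline{D})[[t]]$), and the moment derivative $\partial_{m_2,z}^p$. The only delicate point is that $\partial_{m_2,z}$ maps $\mathcal{O}(\overline{D})$ into itself: here one uses that $\partial_{m_2,z}$ acts on the Taylor coefficients of a function holomorphic on $\overline{D}$, and since $m_2$ is (equivalent to) a strongly regular sequence it is, up to a geometric factor, sub-multiplicative-type so that the radius of convergence is preserved; more carefully, $m_2$ being an $\mathbb{M}$-sequence of positive order with $\mathbb{M}$ strongly regular implies $m_2(p+1)/m_2(p)$ grows at most polynomially (via the characterization $\log(M_{p+1}/M_p)/\log p \to \omega(\mathbb{M})$), so dividing/multiplying Taylor coefficients by these ratios leaves the radius of convergence of a function holomorphic on $\overline{D}$ unchanged. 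Hence by induction every $u_n(z) \in \mathcal{O}(\overline{D})$, so $\hat{u}(t,z) \in \mathcal{O}(\overline{D})[[t]]$.

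The step I expect to be the main obstacle, or at least the one requiring care, is verifying that $\partial_{m_2,z}^p$ preserves $\mathcal{O}(\overline{D})$, i.e.\ that the moment differentiation does not shrink the disc of holomorphy. This is where the hypothesis that $m_2$ is an $\mathbb{M}$-sequence of order $s_2 > 0$ for a strongly regular $\mathbb{M}$ (rather than an arbitrary positive sequence) is essential, and I would invoke the moment-ratio growth estimate discussed in Section~\ref{sec31} (the limit $\lim_{p\to\infty}\log(M_{p+1}/M_p)/\log p = \omega(\mathbb{M})$ together with $(lc)$) to bound $m_2(p+j)/m_2(p)$ from above and below by quantities that are, up to constants $C^j$, polynomial in $p$, so that the Cauchy–Hadamard radius is untouched. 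Everything else — the bookkeeping of moment factors in the recursion, the uniqueness argument by strong induction on $n$, and the closure of $\mathcal{O}(\overline{D})$ under multiplication and addition — is routine. I would present the recursion explicitly, note that $k < p$ is not actually needed for this lemma (it will matter for the summability theorem), and conclude.
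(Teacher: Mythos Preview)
Your approach is essentially the same as the paper's: write $\hat u$ as a formal power series in $t$, use the initial conditions to fix the first $k$ coefficients, and derive the recursion $u_{n+k,\star}(z)=a(z)\partial_{m_2,z}^{p}u_{n,\star}(z)+\hat f_{n,\star}(z)$ (in the paper's normalization $\hat u=\sum u_{n,\star}(z)t^n/m_1(n)$), which determines all coefficients uniquely and keeps them in $\mathcal{O}(\overline{D})$ when $\hat f\in\mathcal{O}(\overline{D})[[t]]$. Your extra care in justifying that $\partial_{m_2,z}$ preserves $\mathcal{O}(\overline{D})$ via the polynomial growth of $m_2(p+1)/m_2(p)$ is correct and fills in a detail the paper's proof takes for granted; your observation that $k<p$ is not needed here is also accurate.
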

\begin{proof}
Let $\hat{u}(t,z)\in\C[[t,z]]$. We write $\hat{u}(t,z)=\sum_{n\ge0}\frac{u_{n,\star}(z)}{m_1(n)}t^{n}$, for some $u_{n,\star}(z)\in\C[[z]]$. The initial conditions of (\ref{epral}) determine $u_{j,\star}(z)=m_1(0)\varphi_j(z)$ in order that $\hat{u}(t,z)$ is a formal solution of (\ref{epral}). We plug the formal power series $\hat{u}(t,z)$ into the problem to arrive at the recurrence formula
\begin{equation}\label{e547}
u_{n+k,\star}(z)=a(z)\partial_{m_2,z}^{p}u_{n,\star}(z)+\hat{f}_{n,\star}(z),
\end{equation}
where we write $\hat{f}(t,z)=\sum_{n\ge0}\frac{\hat{f}_{n,\star}(z)}{m_1(n)}t^{n}$. Therefore, the elements $u_{n,\star}(z)$ for $n\ge k$ are determined by the initial data. Furthermore, under the convergence assumption on $\hat{f}$ the solution of (\ref{e547}) belongs to $\mathcal{O}(\overline{D})$. 
\end{proof}

From now on, the pair $(\mathbb{E},\left\|\cdot\right\|_{\mathbb{E}})$ denotes the Banach space of holomorphic functions in $\overline{D}$, and $\left\|\cdot\right\|_{\mathbb{E}}$ stands for the norm
$$\left\|\sum_{n=0}^{\infty}a_nz^n\right\|_r:=\sum_{n=0}^{\infty}|a_n|r^n.
$$

\begin{lemma}
 \label{le:5}
 Let $m=(m(p))_{p\ge0}$ be a (lc) sequence and $f\in\mathcal{O}(\overline{D})$.
 If there exist $C>0$ and $n\in\mathbb{N}_0$ such that
 \begin{equation}
  \label{eq:f}
  \left\|f(z)\right\|_{\tilde{r}}\leq \frac{|z|^n}{m(n)}C\quad\textrm{for every}\quad z\in\overline{D},\quad \tilde{r}=|z|
 \end{equation}
 then
 \begin{equation*}
  \left\|\partial_{m,z}^{-k}f(z)\right\|_{\tilde{r}}\leq \frac{|z|^{n+k}}{m(n+k)}C\quad\textrm{for every}\quad k\in\mathbb{N}_0
  \quad\textrm{and}\quad z\in\overline{D},\quad \tilde{r}=|z|.
 \end{equation*}
 \end{lemma}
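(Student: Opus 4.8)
The plan is to reduce the statement about the iterated inverse operator $\partial_{m,z}^{-k}$ to the case $k=1$ and then iterate. First I would recall from the discussion after Definition~\ref{def260} that $\partial_{m,z}^{-1}$ acts on monomials by $\partial_{m,z}^{-1}(z^j)=\frac{m(j)}{m(j+1)}z^{j+1}$, so that if $f(z)=\sum_{j\ge0}a_jz^j$ then
\[
\partial_{m,z}^{-1}f(z)=\sum_{j\ge0}a_j\,\frac{m(j)}{m(j+1)}\,z^{j+1}.
\]
The key elementary fact I would isolate is the following: for a (lc) sequence, by~(\ref{e136}) one has $m(j)m(1)\le m(j+1)$, hence $m(j)/m(j+1)\le 1/m(1)\le 1$ (using $m(0)=1$ and, more carefully, the monotonicity of $m(j+1)/m(j)$ that $(lc)$ provides). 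Actually the cleaner inequality to use is $\frac{m(j)}{m(j+1)}\le \frac{m(n+j)}{m(n+j+1)}$ whenever we want to compare against shifted indices — but for the statement as written the direct route is to control the norm termwise.

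The main step is then the termwise estimate. Writing $\tilde r=|z|$ and expanding $f(z)=\sum_j a_j z^j$, the hypothesis~(\ref{eq:f}) says $\sum_j |a_j|\,\tilde r^{\,j}\le \frac{\tilde r^{\,n}}{m(n)}C$ for every $\tilde r\in(0,r]$. I would compare the Taylor coefficients of $\partial_{m,z}^{-1}f$ with those of the ``majorant monomial'' $\frac{z^n}{m(n)}C$: concretely, I claim
\[
\bigl\|\partial_{m,z}^{-1}f(z)\bigr\|_{\tilde r}
=\sum_{j\ge0}|a_j|\,\frac{m(j)}{m(j+1)}\,\tilde r^{\,j+1}
\le \frac{\tilde r^{\,n+1}}{m(n+1)}C .
\]
To see this, note that the $(lc)$ property makes $j\mapsto m(j+1)/m(j)$ nondecreasing, so for every $j\ge0$,
\[
\frac{m(j)}{m(j+1)}\,\tilde r^{\,j+1}
=\Bigl(\frac{m(j)}{m(j+1)}\cdot\frac{m(n+1)}{m(n)}\Bigr)\,\tilde r\cdot \frac{m(n)}{m(n+1)}\,\tilde r^{\,j}
\le \tilde r\cdot\frac{m(n)}{m(n+1)}\cdot\frac{m(n)}{m(n+1)}\cdot\frac{m(n+1)}{m(n)}\,\tilde r^{\,j}\,,
\]
which is not quite the right bookkeeping; the honest argument is simpler: since $m(j+1)/m(j)\le m(n+1)/m(n)$ for $j\le n$ and $\ge$ for $j\ge n$ is the wrong direction, one instead observes directly that $\frac{m(j)}{m(j+1)}\le\frac{m(n)}{m(n+1)}\cdot\frac{\text{(something)}}{\ }$... — so in the write-up I would instead use the hypothesis with a well-chosen radius. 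That is the honest approach: apply~(\ref{eq:f}) at radius $\tilde r$ to get $|a_j|\,\tilde r^{\,j}\le \frac{\tilde r^{\,n}}{m(n)}C$ for each individual $j$ (since each term is bounded by the sum), then
\[
|a_j|\,\frac{m(j)}{m(j+1)}\,\tilde r^{\,j+1}\le \frac{C}{m(n)}\,\frac{m(j)}{m(j+1)}\,\tilde r^{\,n+1},
\]
and finally sum a geometric-type series after extracting a fixed ratio $<1$ from $m(j)/m(j+1)$ for $j$ large, handling the finitely many small $j$ separately; combined with $m(n)m(1)\le m(n+1)$ this yields the claimed bound with the constant $C$ unchanged (the clean constant is exactly what $(lc)$ buys us via $m(n)/m(n+1)\le m(n+1)/m(n+2)\le\cdots$, letting the tail telescope against $\frac{1}{m(n+1)}$).

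Once the $k=1$ case is established, the general case follows by induction on $k$: apply the $k=1$ statement to $g:=\partial_{m,z}^{-(k-1)}f$, which by the inductive hypothesis satisfies $\|g(z)\|_{\tilde r}\le \frac{|z|^{n+k-1}}{m(n+k-1)}C$, so that $\|\partial_{m,z}^{-k}f(z)\|_{\tilde r}=\|\partial_{m,z}^{-1}g(z)\|_{\tilde r}\le \frac{|z|^{n+k}}{m(n+k)}C$, which is precisely the assertion. I expect the main obstacle to be the bookkeeping in the $k=1$ step: getting the \emph{same} constant $C$ (rather than $C$ times a factor depending on $n$) requires using the logarithmic convexity in the form that the ratios $m(j)/m(j+1)$ are nonincreasing, so that dividing by $m(n+1)$ instead of $m(n)$ exactly absorbs the loss; this is where~(\ref{e136}) and the monotonicity of $m(j+1)/m(j)$ are essential, and it is worth stating explicitly rather than hand-waving.
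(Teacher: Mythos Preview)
Your argument has a genuine gap: you never observe that the hypothesis forces the low-order Taylor coefficients of $f$ to vanish. Since $\left\|f(z)\right\|_{\tilde r}=\sum_{j\ge0}|a_j|\tilde r^{\,j}\le \frac{\tilde r^{\,n}}{m(n)}C$ holds for \emph{every} $\tilde r\in(0,r]$, letting $\tilde r\to 0$ gives $a_0=\cdots=a_{n-1}=0$, so $f(z)=\sum_{j\ge n}a_jz^j$. This is exactly the point at which your $k=1$ step stalls: you correctly note that $(lc)$ makes $j\mapsto m(j)/m(j+1)$ nonincreasing, but then worry that ``for $j\le n$ the inequality goes the wrong way''. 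Once you know the sum starts at $j=n$ there is nothing to worry about, and the clean one-line computation is
\[
\bigl\|\partial_{m,z}^{-1}f(z)\bigr\|_{\tilde r}
=\sum_{j\ge n}|a_j|\,\frac{m(j)}{m(j+1)}\,\tilde r^{\,j+1}
\le \frac{m(n)}{m(n+1)}\,\tilde r\sum_{j\ge n}|a_j|\,\tilde r^{\,j}
\le \frac{\tilde r^{\,n+1}}{m(n+1)}\,C,
\]
after which your induction on $k$ goes through. Your fallback route (bound each term $|a_j|\tilde r^{\,j}$ individually by the full sum and then sum a geometric-type series in $m(j)/m(j+1)$) cannot yield the \emph{same} constant $C$; it produces a constant depending on the sequence $m$, which would break the later induction in $q$ in Theorem~\ref{teopral} where this lemma is applied repeatedly.

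For comparison, the paper's proof uses the same vanishing observation but then treats all $k$ at once rather than inducting: writing $f(z)=\sum_{j\ge n}f_jz^j$ and $g(z):=\sum_{j\ge0}|f_{j+n}|m(n)z^j$ (so $\|g\|_{\tilde r}\le C$), one has
\[
\partial_{m,z}^{-k}f(z)=\frac{z^{n+k}}{m(n+k)}\sum_{j\ge0}\frac{m(n+j)\,m(n+k)}{m(n+j+k)}\,f_{j+n}z^{j}
\ll \frac{z^{n+k}}{m(n+k)}\,g(z),
\]
using only that $(lc)$ gives $\dfrac{m(n+j)}{m(n)}\le\dfrac{m(n+j+k)}{m(n+k)}$. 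Both approaches are short once the vanishing of $a_0,\ldots,a_{n-1}$ is on the table; that is the missing idea in your proposal.
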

 \begin{proof}
 By (\ref{eq:f}) we may write
  $f(z)=\sum_{j=n}^{\infty}f_jz^j\in\mathcal{O}(\overline{D})$. We define the auxiliary function $g(z)\in\mathcal{O}(\overline{D})$ as
  $$
  g(z):=\sum_{j=0}^{\infty}|f_{j+n}|m(n)z^j.
  $$
  By (\ref{eq:f}) we get $\left\|g(z)\right\|_{\tilde{r}}\le C$
  and $f(z)\ll\frac{z^n}{m(n)}g(z)$. Since $m$ is a (lc) sequence
  we conclude that
  $$
  \partial_{m,z}^{-k}f(z)=\frac{z^{n+k}}{m(n+k)}\sum_{j=0}^{\infty}\frac{m(n+j)m(n+k)}{m(j+n+k)}f_{j+n}z^{j}
  \ll \frac{z^{n+k}}{m(n+k)}\sum_{j=0}^{\infty}|f_{j+n}|m(n)z^j=\frac{z^{n+k}g(z)}{m(n+k)}.
  $$
 Hence
 $$
 \left\|\partial_{m,z}^{-k}f(z)\right\|_{\tilde{r}}\le \frac{|z|^{n+k}}{m(n+k)}\left\|g(z)\right\|_{\tilde{r}}
 \le\frac{|z|^{n+k}}{m(n+k)}C
 $$
 for every  $k\in\mathbb{N}_0$ and $z\in\overline{D}$, $\tilde{r}=|z|$.
 \end{proof}

\begin{theo}\label{teopral}
Under the assumptions made on the elements involved in the Cauchy problem (\ref{epral}) let $\hat{u}(t,z)$ be the formal solution of (\ref{epral}) and $d\in\R$. We define the strongly regular sequence $\overline{\mathbb{M}}=(M_n^{\frac{s_2p}{k}-s_1})_{n\ge0}$. The following statements are equivalent:
\begin{enumerate}
\item[(i)] $\hat{u}(t,z)$ is $\overline{\mathbb{M}}$-summable along direction $d$ as a formal power series in $\mathbb{E}[[t]]$.
\item[(ii)] $\hat{f}(t,z)\in\mathbb{E}[[t]]$ and $\partial_{m_2,z}^{j}\hat{u}(t,0)\in\C[[t]]$ for $j=0,\ldots,p-1$ are $\overline{\mathbb{M}}$-summable along direction $d$. 
\end{enumerate}
If one of the previous equivalent statements holds then the sum of $\hat{u}(t,z)$ is an actual solution of (\ref{epral}).
\end{theo}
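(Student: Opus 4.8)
\emph{Overall approach.} The plan is to pass to the formal $\overline{\mathbb M}$-moment Borel transform in the variable $t$, which turns summability statements into convergence-plus-growth statements, and then to solve the transformed Cauchy problem by a Cauchy--Kovalevskaya-type fixed point, using Theorem~\ref{lemma1} to control the moment derivative in $t$ and Lemma~\ref{le:5} to control the inverse moment derivative in $z$. Concretely, I would fix a kernel $\bar e$ for $\overline{\mathbb M}$-summability with moment sequence $m_{\bar e}\sim\overline{\mathbb M}$ (assuming $\omega(\overline{\mathbb M})<2$, the general case following by ramification of kernels) and put $\mathfrak m:=m_1m_{\bar e}$, which by Lemma~\ref{lema268} is a sequence of moments; since $m_1$ is an $\mathbb M$-sequence of order $s_1$ and $\overline{\mathbb M}=\mathbb M^{s_2p/k-s_1}$, the sequence $\mathfrak m$ is an $\mathbb M$-sequence of order $\sigma:=s_2p/k$, so that $\sigma k=s_2p$. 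Applying $\hat{\mathcal B}_{m_{\bar e},t}$ to (\ref{epral}) and using Lemma~\ref{lema268}, the functions $v:=\hat{\mathcal B}_{m_{\bar e},t}\hat u$ and $g:=\hat{\mathcal B}_{m_{\bar e},t}\hat f$ satisfy $\partial_{\mathfrak m,t}^{k}v-a(z)\partial_{m_2,z}^{p}v=g$ with $\partial_{\mathfrak m,t}^{j}v(0,z)=\varphi_j(z)$, $j=0,\dots,k-1$. By Definition~\ref{defi271} and Theorem~\ref{teo1}, (i) amounts to $v\in\mathcal O^{\overline{\mathbb M}}(\hat S_d,\mathbb E)$, while (ii) amounts to $g\in\mathcal O^{\overline{\mathbb M}}(\hat S_d,\mathbb E)$ together with $\psi_j:=\partial_{m_2,z}^{j}v(t,0)\in\mathcal O^{\overline{\mathbb M}}(\hat S_d,\C)$ for $j=0,\dots,p-1$, because the trace equals $\hat{\mathcal B}_{m_{\bar e},t}(\partial_{m_2,z}^{j}\hat u(t,0))$ (evaluation at $z=0$, $\partial_{m_2,z}$ and $\hat{\mathcal B}_{m_{\bar e},t}$ commute).

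\emph{The direction (i)$\Rightarrow$(ii)} is the easy one: the traces inherit the exponential-$\tilde M$ bound from $v$ via Cauchy's estimate $|\psi_j(t)|\le m_2(j)r^{-j}\|v(t,\cdot)\|_{r}$, and $g=\partial_{\mathfrak m,t}^{k}v-a(z)\partial_{m_2,z}^{p}v$ belongs to $\mathcal O^{\overline{\mathbb M}}(\hat S_d,\mathbb E)$ because Theorem~\ref{lemma1}/Proposition~\ref{prop497} (with $\tilde{\mathbb M}=\overline{\mathbb M}$, $m_e=\mathfrak m$ and fixed order $k$) bounds $\partial_{\mathfrak m,t}^{k}v$, while $\partial_{m_2,z}^{p}$ and multiplication by $a(z)$ act as bounded operators on the coefficient space (after a harmless shrinking of $D$, using that $a,a^{-1},\varphi_j,\hat f$ are given on a neighbourhood of $\overline D$).

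\emph{The direction (ii)$\Rightarrow$(i)} is the heart of the matter. Given $g$ and $\psi_0,\dots,\psi_{p-1}$ in the respective classes, I would set $v_0:=\sum_{j=0}^{p-1}\frac{\psi_j(t)}{m_2(j)}z^j-\partial_{m_2,z}^{-p}[a(z)^{-1}g(t,z)]\in\mathcal O^{\overline{\mathbb M}}(\hat S_d,\mathbb E)$, let $\mathcal P:=\partial_{m_2,z}^{-p}\circ(a(z)^{-1}\,\cdot\,)$ — a $z$-operator raising the $z$-valuation by at least $p$ — and define $v:=\sum_{L\ge0}\mathcal P^{L}\partial_{\mathfrak m,t}^{Lk}v_0$ (the rearrangement is legitimate because $\partial_{\mathfrak m,t}$ commutes with $\mathcal P$). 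One checks $v=v_0+\mathcal P\,\partial_{\mathfrak m,t}^{k}v$, hence $\partial_{\mathfrak m,t}^{k}v-a(z)\partial_{m_2,z}^{p}v=g$ and $\partial_{m_2,z}^{j}v(t,0)=\psi_j$ for $j<p$, so by uniqueness of the formal solution $v=\hat{\mathcal B}_{m_{\bar e},t}\hat u$. It remains to show $v\in\mathcal O^{\overline{\mathbb M}}(\hat S_d,\mathbb E)$. Theorem~\ref{lemma1} gives, uniformly in $L$, $\|\partial_{\mathfrak m,t}^{Lk}v_0(t,\cdot)\|_{r}\le C_1C_2^{Lk}\mathfrak m(Lk)\exp(\tilde M(C_3|t|))$, where it is crucial that the growth constant $C_3$ in (\ref{e279}) does not depend on the order $Lk$ of the moment derivative. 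Iterating Lemma~\ref{le:5} through $\mathcal P^{L}$ — multiplication by $a(z)^{-1}$ only rescales the constant by $\|a^{-1}\|_r$ while preserving a Nagumo-type bound with weight $|z|^{n}/m_2(n)$ — then yields $\|\mathcal P^{L}\partial_{\mathfrak m,t}^{Lk}v_0(t,\cdot)\|_{r}\le \frac{r^{pL}}{m_2(pL)}\|a^{-1}\|_r^{L}C_1C_2^{Lk}\mathfrak m(Lk)\exp(\tilde M(C_3|t|))$. Finally, log-convexity of $\mathbb M$ together with $M_0=1$ gives $M_{Lk}^{p/k}\le M_{Lp}$, whence, by the $\mathbb M$-sequence bounds and the balance $\sigma k=s_2p$, one gets $\mathfrak m(Lk)/m_2(pL)\le\theta_0^{L}$ for some $\theta_0>0$; summing the resulting geometric series (legitimate after shrinking $r$, where $p>k$ and the $r$-dependence of the auxiliary constants enter) shows $\|v(t,\cdot)\|_{r}\le C\exp(\tilde M(C_3|t|))$ on $\hat S_d$, which is (i). Moreover, the $\overline{\mathbb M}$-sum solves (\ref{epral}) because $\mathcal S_{\overline{\mathbb M},d}$ commutes with $\partial_{m_1,t}^{k}$, with $\partial_{m_2,z}^{p}$, and with multiplication by $a(z)$ (Definition~\ref{def487}, Corollary~\ref{coro1}, Lemma~\ref{lema268}, and continuity of the $z$-operators) and respects the asymptotic expansion at $t=0$, so applying it to statement (i) turns (\ref{epral}) into an identity for $u:=\mathcal S_{\overline{\mathbb M},d}(\hat u)$, initial conditions included.

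\emph{Main obstacle.} The delicate step is the convergence estimate in (ii)$\Rightarrow$(i): the factorial growth of $\mathfrak m(Lk)$ produced by the $L$ iterated moment $t$-derivatives must be offset by the gain of $z$-valuation $pL$, and every constant — most importantly the exponential growth rate in $t$ — must remain uniform in $L$. That is exactly what the order-independence of $C_3$ in Theorem~\ref{lemma1} and the weighted estimate of Lemma~\ref{le:5} are tailored to provide, and it is where the hypotheses $1\le k<p$ and $s_2p>s_1k$ are used (respectively to make the contraction after shrinking $D$ effective and to make $\overline{\mathbb M}$ a legitimate strongly regular sequence with $\sigma k=s_2p$).
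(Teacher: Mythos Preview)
Your argument is correct and rests on the same Neumann-series idea as the paper: iterate $\partial_{m_2,z}^{-p}\circ a(z)^{-1}\circ\partial^{k}_{\cdot,t}$, bound the iterated $t$-moment derivatives by Theorem~\ref{lemma1} (with growth constant $C_3$ independent of the order), absorb their factorial growth into the $z$-valuation gain via Lemma~\ref{le:5}, and close a geometric series through the balance $\sigma k=s_2p$. The one genuine difference is the plane in which the iteration is run. You first apply $\hat{\mathcal B}_{m_{\bar e},t}$ and build $v=\hat{\mathcal B}_{m_{\bar e},t}\hat u$ directly in $\mathcal O^{\overline{\mathbb M}}(\hat S_d,\mathbb E)$, so that once the geometric estimate holds the conclusion \emph{is} statement~(i). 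The paper instead iterates on the \emph{sums} $\omega_q:=\mathcal S_{\overline{\mathbb M},d}(\hat\omega_q)$ in the sectorial region $G$, invoking Proposition~\ref{prop497} (the Laplace-side avatar of Theorem~\ref{lemma1}) rather than Theorem~\ref{lemma1} itself, and must then verify via $T^{-}_{e,d}$ and Theorem~\ref{teo324} that $\omega=\sum_q\omega_q$ is indeed the $\overline{\mathbb M}$-sum of $\partial_{m_2,z}^p\hat u$. Your route saves this last Borel--Laplace bookkeeping step; the paper's route keeps the sum $u$ visible throughout, which makes the final ``actual solution'' claim a one-line application of Watson's lemma. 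Two minor remarks: the polynomial part of your $v_0$ should carry a factor $m_2(0)$ (compare the paper's $\hat\psi_j:=\frac{m_2(0)}{m_2(j)}\partial_{m_2,z}^j\hat u(t,0)$), and your inequality $M_{Lk}^{p/k}\le M_{Lp}$ does follow from (lc) with $M_0=1$ alone, since the Ces\`aro means $n^{-1}\log M_n$ are nondecreasing; the paper obtains the same bound, up to a harmless geometric factor, from (mg).
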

\begin{proof}

(i)$\Rightarrow$ (ii). Equation (\ref{e547}) entails that $\hat{f}(t,z)=\sum_{n\ge0}\frac{\hat{f}_{n,\star}(z)}{m_1(n)}t^{n}\in\mathbb{E}[[t]]$ whenever $\hat{u}(t,z)\in\mathbb{E}[[t]]$. In addition to this, the space $\mathbb{E}\{t\}_{\overline{\mathbb{M}},d}$ is a differential algebra, with $\mathcal{S}_{\overline{\mathbb{M}},d}$ respecting the operations of addition, product and derivation (see Proposition 6.20 (i)~\cite{sanzproceedings}) and also under the action of the operator $\partial_{m_2,z}$ (see Corollary~\ref{coro1}). Regarding equation (\ref{epral}) we conclude that $\hat{f}(t,z)\in\mathbb{E}\{t\}_{\overline{\mathbb{M}},d}$.

Let $0\le j\le p-1$. The same argument as before yields that the formal power series $\partial_{m_2,z}^{j}\hat{u}(t,z)\in\mathbb{E}\{t\}_{\overline{\mathbb{M}},d}$. A direct application of the definition of summable formal power series guarantees summability of its evaluation at $z=0$ along direction $d$.

\vspace{0.3cm}
(ii)$\Rightarrow$ (i). Let $\hat{\psi}_0(t):=\hat{u}(t,0)$, and for all $1\le j\le p-1$ let $\hat{\psi}_{j}(t):=\frac{m_2(0)}{m_2(j)}\partial_{m_2,z}^{j}\hat{u}(t,0)$. We also write $\hat{\omega}(t,z):=\partial_{m_2,z}^{p}\hat{u}(t,z)$. We observe that the formal power series $\hat{\omega}(t,z)$ satisfies the equation
$$\left(1-\frac{1}{a(z)}\partial_{m_1,t}^{k}\partial_{m_2,z}^{-p}\right)\hat{\omega}(t,z)=\hat{g}(t,z),$$
with 
$$\hat{g}(t,z):=\frac{1}{a(z)}(\partial_{m_1,t}^k\hat{\psi}_0(t)+z\partial_{m_1,t}^k\hat{\psi}_1(t)+\ldots+z^{p-1}\partial_{m_1,t}^k\hat{\psi}_{p-1}(t)-\hat{f}(t,z)).$$

We write $\hat{\omega}(t,z)$ in the form
$$\hat{\omega}(t,z)=\sum_{q\ge0}\hat{\omega}_q(t,z),$$
where
$$\hat{\omega}_0(t,z):=\hat{g}(t,z),\quad \hbox{ and }\quad \hat{\omega}_q(t,z):=\frac{1}{a(z)}\partial_{m_1,t}^{k}\partial_{m_2,z}^{-p}\hat{\omega}_{q-1}(t,z)\hbox{ for all }q\ge1.$$
Observe that the hypotheses in (ii) together with the properties of differential algebra of $\mathbb{E}\{t\}_{\overline{\mathbb{M}},d}$ guarantee that $\hat{\omega}_0(t,z)\in\mathbb{E}[[t]]$ is $\overline{\mathbb{M}}$-summable in direction $d$. Let $\omega_0(t,z)\in\mathcal{O}(G\times \overline{D})$ denote its sum, where $G$ stands for a sectorial region of opening larger than $\pi\omega(\overline{\mathbb{M}})$ bisected by direction $d$. By Proposition~\ref{prop497}, for all $G'\prec G$, there exist $C_{4},\,C_5>0$ such that
$$\left\|\partial_{m_1,t}^n\omega_0(t,z)\right\|_{r}\le C_4C_5^{n}m_{1}(n)M_n^{\frac{s_2p}{k}-s_1}\le \tilde{C}_1\tilde{C}_2^{n}M_{n}^{\frac{s_2p}{k}},$$
for some $\tilde{C}_1,\,\tilde{C}_2>0$, all $n\in\N_0$ and $t\in G'$. An induction argument allows to state that for every $q\ge0$ the formal power series $\hat{\omega}_{q}(t,z)\in\mathbb{E}[[t]]$ is $\overline{\mathbb{M}}$-summable in direction $d$, with 
$$\left\|\partial_{m_1,t}^n\omega_q(t,z)\right\|_{\tilde{r}}\le \tilde{C}_1C^{q}\tilde{C}_2^{qk+n}M_{qk+n}^{\frac{s_2p}{k}}\frac{|z|^{pq}}{m_2(pq)},$$
for $t\in G'\prec G$, $z\in\overline{D}$ with $\tilde{r}=|z|$ and $C=\left\|\frac{1}{a(z)}\right\|_r$.
Indeed, by Lemma \ref{le:5} and by the inductive hypothesis we get
\begin{multline*}
\left\|\partial_{m_1,t}^n\omega_{q+1}(t,z)\right\|_{\tilde{r}}=\left\|\frac{1}{a(z)}\partial_{m_2,z}^{-p}\partial_{m_1,t}^{k+n}\hat{\omega}_{q}(t,z)\right\|_{\tilde{r}}\le C\frac{|z|^{pq+p}}{m_2(pq+p)}\tilde{C}_1C^{q}\tilde{C}_2^{qk+n+k}M_{qk+n+k}^{\frac{s_2p}{k}}
\end{multline*}
for $t\in G'\prec G$, $z\in\overline{D}$ and $\tilde{r}=|z|$.

We have the following upper bound:
$$\sum_{q\ge0}\left\|\partial_{m_1,t}^n\omega_q(t,z)\right\|_{\tilde{r}}\le 
\tilde{C}_1\tilde{C}_2^n\sum_{q\ge0}(C\tilde{C}_2^k|z|^p)^{q}M_{qk+n}^{\frac{s_2p}{k}}\frac{1}{m_2(pq)}.$$

Due to $(mg)$ condition, the fact that $m_2$ is an $\mathbb{M}$-sequence of order $s_2$ (see also \cite{LMS}, Lemma 8) together with (\ref{e140}) yield
\begin{multline}
M_{qk+n}^{\frac{s_2p}{k}}\frac{1}{m_2(pq)}\le (A_{1}^{qk+n}M_{qk}M_n)^{\frac{s_2p}{k}}\frac{1}{A_3^{pq}M_{pq}^{s_2}}=\frac{A_1^{\frac{s_2p(qk+n)}{k}}}{A_{3}^{pq}}M_{n}^{\frac{s_2p}{k}}\frac{M_{qk}^{\frac{s_2p}{k}}}{M_{pq}^{s_2}}\\
\le \frac{A_1^{\frac{s_2p(qk+n)}{k}}}{A_{3}^{pq}}M_{n}^{\frac{s_2p}{k}}\frac{A_1^{qps_2(k+1)k/2}M_{qp}^{s_2}}{M_{qp}^{s_2}}=\frac{A_1^{\frac{s_2p(qk+n)}{k}}A_1^{qps_2(k+1)k/2}}{A_{3}^{pq}}M_{n}^{\frac{s_2p}{k}},
\end{multline}
for some $A_1,\,A_3>0$
We finally have
$$\sum_{q\ge0}\left\|\partial_{m_1,t}^n\omega_q(t,z)\right\|_{\tilde{r}}\le 
\tilde{C}_1\tilde{C}_4^nM_{n}^{\frac{s_2p}{k}}\sum_{q\ge0}(A_{3}^{-p}A_1^{ps_2+ps_2(k+1)k/2}C\tilde{C}_2^k|z|^p)^{q}.$$
The previous series is convergent for $|z|<\frac{A_{3}}{A_1^{s_2+s_2(k+1)k/2}}\left(\frac{1}{C\tilde{C}_2^k}\right)^{1/p}=:r'$. Therefore, one has that 
$$\omega(t,z):=\sum_{q\ge0}\omega_q(t,z)$$
defines an analytic function on $G\times D(0,r')$. We reduce $r$ in order that $r\le r'$, if necessary, to arrive at 
\begin{equation}\label{e635}
\sum_{q\ge0}\left\|\partial_{m_1,t}^n\omega_q(t,z)\right\|_{\mathbb{E}}\le 
\tilde{C}_3\tilde{C}_4^nM_{n}^{\frac{s_2p}{k}},
\end{equation}
for some $\tilde{C}_3>0$, which is valid for all $t\in G'$.

We show that $\omega(t,z)$ is the $\overline{\mathbb{M}}$-sum of $\hat{\omega}(t,z)=\sum_{q\ge0}\hat{\omega}_q(t,z)\in\mathbb{E}[[t]]$ along direction $d$.

Let $e$ be a kernel function for $\overline{\mathbb{M}}$-summability. Then, for all $q\in\N_0$ it holds that $\omega_q(t,z)=T_{e,d}\hat{\mathcal{B}}_{m_e,t}\hat{\omega}_q(t,z)$ and $\omega(t,z)=\sum_{q\ge0}T_{e,d}\hat{\mathcal{B}}_{m_e,t}\hat{\omega}_q(t,z)$. 

By (\ref{e635}) we get that $T^{-}_{e,d}\omega(t,z)\in\mathcal{O}(D'\times D)$ for some disc at the origin $D'$. Proposition~\ref{prop316} can be applied to arrive at $T^{-}_{e,d}\omega(t,z)\in\mathcal{O}^{\overline{\mathbb{M}}}(S_d,\mathbb{E})$, for some infinite sector $S_d$ with bisecting direction $d$. Hence, $T^{-}_{e,d}\omega(t,z)\in\mathcal{O}^{\overline{\mathbb{M}}}(\hat{S}_d,\mathbb{E})$. 

Finally, convergence of the series and Theorem~\ref{teo324} allow us to write
\begin{multline}
T^{-}_{e,d}\omega(t,z)=T^{-}_{e,d}\sum_{q\ge0}T_{e,d}\hat{\mathcal{B}}_{m_e,t}(\hat{\omega}_q(t,z))=T^{-}_{e,d}T_{e,d}\sum_{q\ge0}\hat{\mathcal{B}}_{m_e,t}(\hat{\omega}_q(t,z))\\
=\sum_{q\ge0}\hat{\mathcal{B}}_{m_e,t}(\hat{\omega}_q(t,z))=\hat{\mathcal{B}}_{m_e,t}\left(\sum_{q\ge0}\hat{\omega}_q(t,z)\right)=\hat{\mathcal{B}}_{m_e,t}\hat{\omega}(t,z).
\end{multline}
Therefore, $\hat{\mathcal{B}}_{m_e,t}\hat{\omega}(t,z)\in\mathcal{O}^{\overline{\mathbb{M}}}(\hat{S}_d\times D)$ and the formal power series $\hat{\omega}(t,z)$ is $\overline{\mathbb{M}}$-summable along direction $d$ (as an element in $\mathbb{E}[[t]]$), with sum given by $\omega(t,z)$.

Assume that one of the equivalent statements holds. Let $f(t,z)$ (resp. $u(t,z)$) be the sum of $\hat{f}(t,z)\in\mathbb{E}[[t]]$ (resp.  $\hat{u}(t,z)\in\mathbb{E}[[t]]$) in direction $d$. Then the function $t\mapsto(\partial_{m_1,t}^{k}-a(z)\partial_{m_2,z}^{p})u(t,z)-f(t,z)$ with values in $\mathbb{E}$ admits null $(\overline{\mathbb{M}})$-asymptotic expansion in a sector of opening larger than $\omega(\overline{\mathbb{M}})\pi$. Watson's lemma (see Corollary 4.12~\cite{sanz}) states that it is the null function, which entails that $u(t,z)$ is an analytic solution of (\ref{epral}), which satisfies the Cauchy data.
\end{proof}

Analogous estimates as in the proof of Theorem~\ref{teopral} can be applied to achieve the next result.

\begin{corol}\label{corofinal}
Assume that $s_1k\ge s_2p$. Under the assumptions made on the elements involved in the Cauchy problem (\ref{epral}) let $\hat{u}(t,z)$ be the formal solution of (\ref{epral}) and $d\in\R$. The following statements are equivalent:
\begin{enumerate}
\item[(i)] $\hat{u}(t,z)$ is convergent in a neighborhood of the origin.
\item[(ii)] $\hat{f}(t,z)$ and $\partial_{m_2,z}^{j}\hat{u}(t,0)$ for $j=0,\ldots,p-1$ are convergent in a neighborhood of the origin. 
\end{enumerate}
\end{corol}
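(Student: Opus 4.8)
The plan is to work, as in the proof of Theorem~\ref{teopral}, in the Banach space $\mathbb{E}=\mathcal{O}(\overline{D})$ with the norm $\|\cdot\|_r$ (shrinking $r>0$ if necessary), using that ``$\hat{u}$ is convergent in a neighborhood of the origin'' is equivalent to ``$\hat{u}$ is a convergent element of $\mathbb{E}[[t]]$'' — this is legitimate since $\hat{u}\in\mathbb{E}[[t]]$ by the first Lemma of this section. Write $\hat{u}(t,z)=\sum_{n\ge0}\frac{u_{n,\star}(z)}{m_1(n)}t^n$ and $\hat{f}(t,z)=\sum_{n\ge0}\frac{\hat{f}_{n,\star}(z)}{m_1(n)}t^n$. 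The implication (i)$\Rightarrow$(ii) is then elementary: the $(mg)$ condition on $m_1$ (resp.\ $m_2$) makes $\partial_{m_1,t}$ (resp.\ $\partial_{m_2,z}$) shrink the radius of convergence in $t$ (resp.\ in $z$) by at most a fixed factor, and multiplication by $a(z)\in\mathcal{O}(\overline{D})$ preserves convergence, so that $\hat{f}=\partial_{m_1,t}^k\hat{u}-a(z)\partial_{m_2,z}^p\hat{u}$, each $\partial_{m_2,z}^j\hat{u}$ with $0\le j\le p-1$, and their evaluations at $z=0$, are all convergent.

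For (ii)$\Rightarrow$(i) it suffices to prove that convergence of $\hat{f}$ alone forces convergence of $\hat{u}$ — the convergence of the series $\partial_{m_2,z}^j\hat{u}(t,0)$ then follows exactly as above. Iterating the recurrence (\ref{e547}) one gets, for $n=k\ell+j$ with $\ell=\lfloor n/k\rfloor$ and $0\le j\le k-1$,
\[
u_{n,\star}=m_1(0)\,\big(a(z)\partial_{m_2,z}^p\big)^{\ell}\varphi_j+\sum_{i=0}^{\ell-1}\big(a(z)\partial_{m_2,z}^p\big)^{i}\,\hat{f}_{n-k-ki,\star}.
\]
The technical core — analogous to the estimates used for Theorem~\ref{teopral} (cf.~\cite{LMS}) — is a moment Cauchy-type inequality: for fixed $0<\rho_0<\rho_1<r$ there is $C>0$ with $\big\|\big(a(z)\partial_{m_2,z}^p\big)^{\ell}g\big\|_{\rho_0}\le C^{\ell}m_2(p\ell)\|g\|_{\rho_1}$ for every $g\in\mathcal{O}(\overline{D(0,\rho_1)})$ and $\ell\in\N_0$, which rests on Cauchy estimates together with the $(lc)$ property of $m_2$. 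Convergence of $\hat{f}$ provides $\|\hat{f}_{n,\star}\|_{\rho_1}\le C_0A_0^nm_1(n)$, and the hypothesis $s_1k\ge s_2p$ enters through the inequality $m_2(p\ell)\le C_2^{\ell}m_1(k\ell)$ valid for all $\ell\in\N_0$: since $m_1,m_2$ are $\mathbb{M}$-sequences of orders $s_1,s_2$, this reduces by $(mg)$ and (\ref{e140}) to $M_{p\ell}^{s_2}/M_{k\ell}^{s_1}\le C^{\ell}M_{\ell}^{\,ps_2-ks_1}\le C^{\ell}$, the last inequality being precisely where $ps_2-ks_1\le0$ is invoked. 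Inserting these, together with $m_1(a)m_1(b)\le C_1^{a+b}m_1(a+b)$ (a consequence of (\ref{e136})), into the displayed formula for $u_{n,\star}$ yields $\|u_{n,\star}\|_{\rho_0}\le C_3A_3^nm_1(n)$ for every $n$, so that $\hat{u}$ converges for small $|t|$ and $|z|\le\rho_0$, i.e.\ near the origin.

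The step I expect to be the main obstacle is the moment Cauchy-type bound for the iterates $\big(a(z)\partial_{m_2,z}^p\big)^{\ell}$: one has to check that the $\ell$ successive applications of $\partial_{m_2,z}^p$ contribute altogether only a factor $C^{\ell}m_2(p\ell)$, with no collapse of the radius of holomorphy. Once this is granted, the function of the hypothesis $s_1k\ge s_2p$ is transparent — it is exactly what makes the factor $m_2(p\ell)$ created after $\ell$ applications of $\partial_{m_2,z}^p$ absorbable into $m_1(k\ell)$, and hence into the denominator $m_1(n)$ of $\hat{u}$; this is the borderline at which the $\overline{\mathbb{M}}$-summability appearing in Theorem~\ref{teopral} degenerates into ordinary convergence.
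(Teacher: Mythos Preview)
Your approach for (ii)$\Rightarrow$(i) is genuinely different from the route the paper indicates. The paper says only that ``analogous estimates as in the proof of Theorem~\ref{teopral} can be applied'', which points to the decomposition $\hat\omega=\partial_{m_2,z}^{p}\hat u=\sum_{q\ge0}\hat\omega_q$ with $\hat\omega_q=\big(\tfrac{1}{a}\partial_{m_1,t}^{k}\partial_{m_2,z}^{-p}\big)^{q}\hat g$; that line of argument rests on Lemma~\ref{le:5} (estimates for the \emph{inverse} operator $\partial_{m_2,z}^{-p}$) and makes essential use of the hypotheses on $\partial_{m_2,z}^{j}\hat u(t,0)$ through the very definition of $\hat g$. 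You instead iterate the recurrence~(\ref{e547}) forward, bounding $(a(z)\partial_{m_2,z}^{p})^{\ell}$ directly, and thereby reach the stronger conclusion that convergence of $\hat f$ alone (together with $\varphi_j\in\mathcal{O}(\overline{D})$) already forces convergence of $\hat u$ --- which is consistent with the remark following the corollary about the Newton polygon having no positive slope when $s_1k\ge s_2p$. What your approach buys is this strengthening; what the paper's approach buys is that $\partial_{m_2,z}^{-p}$ is tame (it \emph{gains} a factor $|z|^{p}/m_2(p\cdot)$ with no loss of radius, cf.\ Lemma~\ref{le:5}), so no Cauchy-type inequality for forward moment derivatives is required.

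The step you flag as the obstacle is indeed the crux, and it deserves a warning. The one-shot estimate $\|\partial_{m_2,z}^{n}g\|_{\rho_0}\le \rho_0^{-n}m_2(n)\|g\|_{\rho_1}$ does follow from $(mg)$ provided $A_1\rho_0\le\rho_1$: expand both sides and use $m_2(j+n)\le A_1^{j+n}m_2(j)m_2(n)$. But for the iterate $(a\partial_{m_2,z}^{p})^{\ell}$ the factors $a(z)$ are interspersed, $\partial_{m_2,z}$ satisfies no Leibniz rule, and one cannot collapse the expression to $\partial_{m_2,z}^{p\ell}$. Naively chaining the one-shot bound forces the radius to shrink by the $(mg)$-constant $A_1$ at each of the $\ell$ steps, which is fatal for large $\ell$. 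To make your route go through you need either a Nagumo-type norm adapted to moment derivatives, or a direct majorant-series argument controlling all $\ell$ iterations simultaneously with a single fixed pair $\rho_0<\rho_1$; this is where the appeal to~\cite{LMS} must do real work, and you should verify that the estimates available there (or the integral representation of Theorem~\ref{lemma1}) actually deliver the uniform-in-$\ell$ bound $\|(a\partial_{m_2,z}^{p})^{\ell}g\|_{\rho_0}\le C^{\ell}m_2(p\ell)\|g\|_{\rho_1}$ that you assert.
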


\textbf{Remark:} Theorem~\ref{teopral} is compatible with the results obtained in~\cite{LMS}. Indeed, equation (\ref{epral}) falls into the case $\Gamma=\{(0,p)\}$ in Section 5~\cite{LMS}, and where the associated Newton polygon has one positive slope $k_1$ if and only if $s_2p>s_1k$ and it has no positive slope in the opposite case. Indeed, 
$$\frac{1}{k_1}=\max\left\{0,\frac{s_2p}{k}-s_1\right\}.$$
Theorem 1 in~\cite{LMS} states that the formal solution of the equation $\hat{u}(t,z)=\sum_{n\ge0}u_n(z)t^{n}$ satisfies that for some $0<r'<r$ there exist $C,\,H>0$ such that
$$\sup_{z\in D(0,r')}|u_n(z)|\le CH^n(M_n)^{1/k_1},\quad n\in\N_0.$$
The result remains coherent with Theorem 2,~\cite{LMS}.

 \textbf{Remark:} Theorem~\ref{teopral} is also coherent with the results obtained in~\cite{remy2016} in the Gevrey classical setting. See Theorem 2 and Theorem 3,~\cite{remy2016}.



\end{document}